\newenvironment{keywords}{{\bf Key words. }}{\\}
\newenvironment{AMS}{{\bf AMS subject classification. }}{\\}
\newcommand{\at}[1]{}
\newcommand{\email}[1]{{\tt #1}}
\newcommand{\Gr}{{\rm gph\,}}
\newcommand{\ri}{{\rm ri\,}}
\newcommand{\xb}{\bar x}
\newcommand{\yb}{\bar y}
\newcommand{\zb}{\bar z}
\newcommand{\lb}{\bar\lambda}
\newcommand{\Q}{{\cal Q}}
\newcommand{\I}{{\cal I}}
\newcommand{\J}{{\cal J}}
\newcommand{\A}{{\cal A}}
\newcommand{\M}{{\cal M}}
\newcommand{\R}{\mathbb{R}}
\newcommand{\norm}[1]{\|#1\|}
\newcommand{\dist}[1]{{\rm d}(#1)}
\newcommand{\B}{{\cal B}}
\newcommand{\mv}{\,\vert\, }
\newcommand{\setto}[1]{{\mathop{\to}\limits^#1}}
\newcommand{\longsetto}[1]{{\mathop{\longrightarrow}\limits^#1}}
\newcommand{\skalp}[1]{\langle #1\rangle}
\newcommand{\Tlin}{T^{\rm lin}_\Omega}
\newcommand{\TD}{T_D(F(\xb))}
\newcommand{\NrD}{\widehat N_D(F(\xb))}
\newtheorem{definition}{Definition}
\newtheorem{theorem}{Theorem}
\newtheorem{lemma}{Lemma}
\newtheorem{corollary}{Corollary}
\newtheorem{proposition}{Proposition}
\newtheorem{remark}{Remark}
\newtheorem{algorithm}{Algorithm}
\newlength{\mylength}
\begin{document}

\title{New verifiable stationarity concepts for a class of mathematical programs with disjunctive constraints}
\author{Mat\'u\v{s} Benko, Helmut Gfrerer\thanks{Institute of Computational Mathematics, Johannes Kepler University Linz,
              A-4040 Linz, Austria, \email{benko@numa.uni-linz.ac.at},
             \email{helmut.gfrerer@jku.at}}
}

\date{}

\maketitle
\begin{abstract}In this paper we consider a sufficiently broad class of nonlinear mathematical programs  with disjunctive constraints, which, e.g., include  mathematical programs with complemetarity/vanishing constraints. We present an extension of the concept of $\Q$-stationarity as introduced in the recent paper \cite{BeGfr16b}. $\Q$-stationarity can be easily combined with the well-known notion of M-stationarity to obtain the stronger property of so-called $\Q_M$-stationarity. We show how the property of  $\Q_M$-stationarity (and thus also of M-stationarity) can be efficiently verified for the considered problem class by computing $\Q$-stationary solutions of a certain quadratic program. We consider further the situation that the point which is to be tested for $\Q_M$-stationarity, is not known exactly, but is approximated by some convergent sequence, as it is usually the case when applying some numerical method.
\end{abstract}
\begin{keywords}Mathematical programs with disjunctive constraints, B-stationarity, M-stationarity, $\Q_M$-stationarity.
\end{keywords}
\begin{AMS} 49J53 \and  90C33 \and 90C46
\end{AMS}

\section{Introduction}
In this paper we consider the following {\em mathematical program with disjunctive constraints} (MPDC)
\begin{eqnarray}
  \label{EqMPDC}\min_{x\in\R^n} &&f(x)\\
  \nonumber\mbox{subject to}&& F_i(x)\in D_i:=\bigcup_{j=1}^{K_i}D_i^j,\ i=1,\ldots,m_D,
\end{eqnarray}
where the mappings $f:\R^n\to\R$ and $F_i:\R^n\to\R^{l_i}$, $i=1,\ldots, m_D$ are assumed to be continuously differentiable and $D_i^j\subset \R^{l_i}$, $j=1,\ldots, K_i$, $i=1,\ldots,m_D$ are convex polyhedral sets.

Denoting $m:=\sum_{i=1}^{m_D}l_i$,
\begin{equation}\label{EqDef_F_D}F:=(F_1,\ldots, F_{m_D}):\R^n\to\R^m,\ D:=\prod_{i=1}^{m_D}D_i
\end{equation} we can rewrite the MPDC \eqref{EqMPDC} in the form
\begin{equation}\label{EqMPDC1}
  \min_{x\in\R^n}f(x)\quad\mbox{subject to}\quad F(x)\in D.
\end{equation}
It is easy to see that $D$ can also be written as the union of $\prod_{i=1}^{m_D} K_i$ convex polyhedral sets by
\begin{equation}\label{EqD_nu}D=\bigcup_{\nu\in\J}D(\nu)\quad \mbox{with}\quad \J:=\prod_{i=1}^{m_D}\{1,\ldots, K_i\},\ D(\nu):=\prod_{i=1}^{m_D}D_i^{\nu_i}.\end{equation}

As an example for MPDC consider a {\em mathematical program with complementarity constraints} (MPCC) given by
\begin{eqnarray}
\label{EqMPCC}  \min_{x\in\R^n}&& f(x)\\
\nonumber  \mbox{subject to}&&g_i(x)\leq 0,\ i=1,\ldots m_I,\\
\nonumber  &&h_i(x)= 0,\ i=1,\ldots m_E,\\
\nonumber  &&G_i(x)\geq 0,\ H_i(x)\geq 0, G_i(x)H_i(x)=0,\ i=1,\ldots m_C
\end{eqnarray}
with $f:\R^n\to\R$, $g_i:\R^n\to\R$, $i=1,\ldots,m_I$, $h_i:\R^n\to\R$, $i=1,\ldots,m_E$, $G_i,H_i:\R^n\to\R$, $i=1,\ldots,m_C$. This problem fits into our setting \eqref{EqMPDC} with
$m_D=m_C+1$,
\begin{align*}&F_1=(g_1,\ldots,g_{m_I},h_1\ldots,h_{m_E})^T,\ D_1^1=\R^{m_I}_-\times\{0\}^{m_E},\ l_1=m_I+m_E,\ K_1=1\\
&F_{i+1}=(-G_i,-H_i)^T,\ D_{i+1}^1=\{0\}\times \R_-,\ D_{i+1}^2= \R_-\times \{0\},\ l_{i+1}= K_{i+1}=2,\ i=1,\ldots,m_C.
\end{align*}
MPCC is known to be a difficult optimization problem, because, due to the complementarity constraints $G_i(x)\geq 0$, $H_i(x)\geq 0$, $G_i(x)H_i(x)=0$, many of the standard constraint qualifications of nonlinear programming are violated at any feasible point. Hence it is likely that the usual Karush-Kuhn-Tucker conditions fail to hold at a local minimizer and
various first-order optimality conditions such as Abadie (A-), Bouligand (B-), Clarke (C-), Mordukhovich (M-) and
Strong (S-) stationarity conditions have been studied in the
literature \cite{FleKan03, FuPa99,
KaSchw10a,Out99,Out00,SchSch00,Ye00,Ye05,YeYe97}.

Another prominent example is the {\em mathematical program with vanishing constraints} (MPVC)
\begin{eqnarray}
\label{EqMPVC}  \min_{x\in\R^n}&& f(x)\\
\nonumber  \mbox{subject to}&&g_i(x)\leq 0,\ i=1,\ldots m_I,\\
\nonumber  &&h_i(x)= 0,\ i=1,\ldots m_E,\\
\nonumber  &&H_i(x)\geq 0,\  G_i(x)H_i(x)\leq0,\ i=1,\ldots m_V
\end{eqnarray}
with $f:\R^n\to\R$, $g_i:\R^n\to\R$, $i=1,\ldots,m_I$, $h_i:\R^n\to\R$, $i=1,\ldots,m_E$, $G_i,H_i:\R^n\to\R$, $i=1,\ldots,m_V$.
Again, the problem MPVC can be written in the form \eqref{EqMPDC} with $m_D=m_V+1$, $F_1$, $D_1^1$ as in the case of MPCC and
\[F_{i+1}=(-H_i,G_i)^T,\ D_{i+1}^1=\{0\}\times \R,\ D_{i+1}^2= \R^2_-,\ l_{i+1}=K_{i+1}=2,\ i=1,\ldots,m_V.\]
Similar as in the case of MPCC, many of the standard constraint qualifications of nonlinear programming can be violated at a local solution of \eqref{EqMPVC} and a lot of stationarity concepts have been introduced. For a comprehensive overview for MPVC we refer to \cite{Ho09} and the references therein.

However, when we do not formulate MPCC or MPVC as a nonlinear program but as a disjunctive program MPDC, then first-order optimality conditions can be formulated which are valid under weak constraint qualifications. We know that a local minimizer is always B-stationary, which geometrically means that no feasible descent direction exists, or, in a dual formulation, that the negative gradient of the objective belongs to the  regular normal cone of the feasible region, cf. \cite[Theorem 6.12]{RoWe98}. The difficult task is now to estimate this regular normal cone. For this regular normal cone always a lower inclusion  is available, which yields so-called S-stationarity conditions. For an upper estimate one can use the  limiting normal cone which results in the so-called M-stationarity conditions. The notions of S-stationarity and M-stationarity have been introduced in \cite{FleKanOut07}  for general programs \eqref{EqMPDC1}. S-stationarity always implies B-stationarity, but it requires some strong qualification condition on the constraints. On the other hand, M-stationarity requires only some weak constraint qualification but it does not preclude the existence of feasible descent directions. Further, it is not known in general how to efficiently verify the M-stationarity conditions, since the description of the limiting normal cone involves some combinatorial structure which is not known to be resolved without enumeration techniques. These difficulties in verifying M-stationarity have also some impact for numerical solution procedures. E.g., for many algorithms for  MPCC it cannot be guaranteed that a limit point is M-stationary, cf. \cite{KaSchw15}.

In the recent paper \cite{BeGfr16b} we derived another upper estimate for the regular normal cone yielding so-called $\Q$-stationarity conditions. $\Q$-stationarity can be easily combined with M-stationarity to obtain so-called $\Q_M$ stationarity which is stronger than M-stationarity. For the disjunctive formulations of the problems MPCC and MPVC the $\Q$- and $\Q_M$-stationarity conditions have been worked out in detail in \cite{BeGfr16b}. In this paper we extend this approach to the general problem MPDC. We show that under a qualification condition which ensures S-stationarity of local minimizers, $\Q$-stationarity and S-stationarity are equivalent. Further we prove that under some weak constraint qualification every local minimizer of MPDC is a $\Q_M$-stationary solution and we provide an efficient algorithm for verifying $\Q_M$-stationarity of some feasible point. More exactly, this algorithm either proves the existence of some feasible descent direction, i.e. the point is not B-stationary, or it computes multipliers fulfilling the $\Q_M$-stationarity condition. To this end we consider quadratic programs with disjunctive constraints (QPDC), i.e., the objective function $f$ in MPDC is a convex quadratic function and the mappings $F_i$, $i=1,\ldots,m_D$ are linear.  We propose a basic algorithm for QPDC, which either returns  a $\Q$-stationary point or proves that the problem is unbounded. Further we show that M-stationarity for MPDC is related with $\Q$-stationarity of some QPDC and the combination of the two parts yields the algorithm for verifying $\Q_M$-stationarity.

Our approach is well suited to the MPDC \eqref{EqMPDC} when all the numbers $K_i$, $i=1,\ldots,m_D$ are small or of moderate size. Our disjunctive structure is not induced by integral variables like, e.g., in \cite{Jer77}. It is also not related to the approach of considering the convex hull of a family of convex sets like in \cite{Bal85, CeSoa99}.

The outline of the paper is as follows. In Section 2 we recall some basic definitions from variational analysis and discuss various stationarity concepts. In Section 3 we introduce the concepts of $\Q$- and $\Q_M$-stationarity for general optimization problems. These concepts are worked out in more detail for MPDC in Section 4. In Section 5 we consider quadratic programs with disjunctive linear constraints. We present a basic algorithm for solving such problems, which either return a $\Q$-stationary solution or prove that the problem is not bounded below. In the next section we demonstrate how this basic algorithm can be applied to a certain quadratic program with disjunctive linear constraints in order to verify M-stationarity or $\Q_M$-staionarity of a point or to compute a descent direction. In the last Section 7 we present some results for numerical methods for solving MPDC which prevent convergence to non M-stationary and non $\Q_M$-stationary points.

Our notation is fairly standard. In Euclidean space $\R^n$ we denote by $\norm{\cdot}$ and $\skalp{\cdot,\cdot}$ the Euclidean norm and scalar product, respectively, whereas we denote by $\norm{u}_\infty:=\max\{\vert u_i\vert\mv i=1,\ldots,n\}$ the maximum norm. The closed ball around some point $x$ with radius $r$ is denoted by $\B(x,r)$. Given some cone $Q\subset\R^n$, we denote by $Q^\circ:=\{q^\ast\in\R^n\mv \skalp{q^\ast,q}\leq 0 \forall q\in Q\}$ its polar cone. By $\dist{x,A}:=\inf\{\norm{x-y}\mv y\in A\}$ we refer to the usual distance of some point $x$ to a set $A$. We denote by $0^+C$ the recession cone of  a convex set $C$.

\section{Preliminaries}
For the reader's convenience we start with several notions from variational analysis.
Given a set $\Omega\subset\R^n$ and a point $\bar z\in\Omega$, the cone
\[T_\Omega(\zb)=\{w\mv\exists w_k\to w, t_k\downarrow 0\mbox{ with } \zb+t_kw_k\in\Omega\}\]
is called the (Bouligand/Severi) {\em tangent/contingent cone} to $\Omega$ at $\zb$.
The (Fr\'{e}chet) {\em regular normal cone} to $\Omega$ at $\zb\in\Omega$ can be equivalently defined either by
\begin{equation*}
\widehat N_\Omega(\bar
z):=\Big\{v^\ast\in\R^d \mv\limsup_{z\setto{\Omega}\bar z}\frac{\skalp{v^\ast,z-\bar z}}{\|z-\bar z\|}\le 0\Big\},
\end{equation*}
where $z\setto{\Omega}\zb$ means that $z\to\zb$ with $z\in\Omega$, or as the dual/polar to the contingent cone, i.e., by
\begin{equation*}
\widehat N_\Omega(\bar z):=T_\Omega(\bar z)^\circ.
\end{equation*}
For convenience, we put $\Hat N_\Omega(\zb):=\emptyset$ for $\zb\notin\Omega$. Further, the (Mordukhovich) {\em limiting/basic normal cone} to $\Omega$ at $\bar z\in\Omega$ is given by
\[N_\Omega(\zb):=\big\{w^\ast\in\R^d\mv \exists\;
z_k\to\zb,\;w^\ast_k\to w^\ast\;\mbox{ with }\;w_k^\ast\in\widehat N_\Omega(z_k)\;\mbox{ for all }\;k\big\}.\]
If $\Omega$ is convex, then both the regular and the limiting normal cones coincide with the normal cone in the sense of convex analysis. Therefore we will use in this case the notation $N_{\Omega}$.

Consider now the general mathematical program
\begin{equation}\label{EqGenProg}
  \min_{x\in\R^n}f(x)\quad\mbox{subject to}\quad F(x)\in D
\end{equation}
where $f:\R^n\to\R$, $F:\R^n\to\R^m$ are continuously differentiable and $D\subset\R^m$ is a closed set. Let
\begin{equation}\label{EqFeasReg}\Omega:=\{x\in\R^n\mv F(x)\in D\}
\end{equation}
denote the feasible region of the program \eqref{EqGenProg}. Then a necessary condition for a point $\xb\in\Omega$ being locally optimal is
\begin{equation}\label{EqBstatPrim}\skalp{\nabla f(\xb),u}\geq 0\ \forall u\in T_{\Omega}(\xb),\end{equation}
which is the same as
\begin{equation}\label{EqBstatDual}-\nabla f(\xb)\in \widehat N_\Omega(\xb),
\end{equation}
cf. \cite[Theorem 6.12]{RoWe98}. The main task of applying this first-order optimality condition now is the computation of the regular normal cone $\widehat N_\Omega(\xb)$ which is very difficult for nonconvex $D$.

We always have the inclusion
\begin{equation}\label{EqLowerIncl}
  \nabla F(\xb)^T\widehat N_D(F(\xb))\subset \widehat N_\Omega(\xb),
\end{equation}
but equality will hold in \eqref{EqLowerIncl} for nonconvex sets $D$ only under comparatively strong conditions, e.g. when $\nabla F(\xb)$ is surjective, see \cite[Exercise 6.7]{RoWe98}. The following weaker sufficient condition for equality in \eqref{EqLowerIncl} uses the notion of metric subregularity.
\begin{definition}
  A multifunction $\Psi:\R^n\rightrightarrows \R^m$ is called {\em metrically subregular} at a point $(\xb,\yb)$ of its graph $\Gr\Psi$ with modulus $\kappa>0$, if there is a neighborhood $U$ of $\xb$  such that
  \[\dist{x,\Psi^{-1}(\yb)}\leq \kappa\dist{\yb,\Psi(x)}\ \forall x\in U.\]
\end{definition}
\begin{theorem}[{\cite[Theorem 4]{GfrOut16b}}]\label{ThSuffCondSstat}Let $\Omega$ be given by \eqref{EqFeasReg}
and $\xb\in\Omega$.  If the
multifunction $x\rightrightarrows F(x)-D$ is metrically subregular
at $(\xb,0)$ and if there exists a subspace $L\subset R^m$ such that
\begin{equation}\label{EqLinealitySpace}T_D(F(\xb))+L\subset T_D(F(\xb))
\end{equation} and
\begin{equation}\label{EqSurjectivity}
\nabla F(\xb)\R^n+L=\R^m,\end{equation}
 then
\[\widehat N_\Omega(\xb)=\nabla F(\xb)^T\widehat N_D(F(\xb)).\]
\end{theorem}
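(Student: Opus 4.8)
The inclusion ``$\supseteq$'' is precisely \eqref{EqLowerIncl}, so only the opposite inclusion $\widehat N_\Omega(\xb)\subseteq \nabla F(\xb)^T\NrD$ remains to be shown. Write $A:=\nabla F(\xb)$ and recall from the two descriptions of the regular normal cone that $\widehat N_\Omega(\xb)=T_\Omega(\xb)^\circ$ and $\NrD=\TD^\circ$. The plan is first to replace $T_\Omega(\xb)$ by the linearized cone $\Tlin(\xb):=\{u\mv Au\in\TD\}$. The inclusion $T_\Omega(\xb)\subseteq\Tlin(\xb)$ is automatic from differentiability of $F$. For the reverse inclusion I would invoke metric subregularity: given $u$ with $Au\in\TD$, choose $t_k\downarrow0$ and $d_k\to Au$ with $F(\xb)+t_kd_k\in D$, so that $\dist{F(\xb+t_ku),D}=o(t_k)$ by differentiability; the subregularity estimate then supplies $\hat x_k\in\Omega$ with $\norm{\hat x_k-(\xb+t_ku)}=o(t_k)$, whence $\hat x_k=\xb+t_ku_k$ with $u_k\to u$ and $u\in T_\Omega(\xb)$. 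Thus $T_\Omega(\xb)=\Tlin(\xb)$ and $\widehat N_\Omega(\xb)=\{u\mv Au\in\TD\}^\circ$.

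Next I would use the two subspace conditions to turn the remaining polarity computation into one with a \emph{surjective} linear map. From \eqref{EqLinealitySpace} and $0\in L$ we get $\TD+L=\TD$, i.e.\ $\TD$ is invariant under translations by $L$; polarizing gives $\NrD=\TD^\circ\subseteq L^\perp$. Combining this with \eqref{EqSurjectivity}, $(\range A)^\perp\cap L^\perp=(\range A+L)^\perp=\{0\}$, so $A^T$ is injective on $L^\perp$, in particular on $\NrD$. Put $M:=L^\perp$, let $P$ be the orthogonal projector onto $M$ and set $B:=PA$. Applying $P$ to \eqref{EqSurjectivity} and using $PL=\{0\}$ shows $\range B=M$, i.e.\ $B$ is surjective. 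Since $\TD$ is $L$-invariant, $Au\in\TD\Leftrightarrow P(Au)=Bu\in\TD\cap M=:\tilde T$, so $\{u\mv Au\in\TD\}=B^{-1}(\tilde T)$; moreover $\TD^\circ=\tilde T^{\circ}$ (polar taken in $M$) and, for $\lambda\in M$, $A^T\lambda=(PA)^T\lambda=B^T\lambda$. Hence the asserted equality reduces to
\begin{equation*}
 \big(B^{-1}(\tilde T)\big)^\circ=B^T\big(\tilde T^{\circ}\big),\qquad B\ \text{surjective}.
\end{equation*}

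Finally I would prove this reduced identity by a lifting argument that does not require convexity of $\tilde T$. The inclusion ``$\supseteq$'' is immediate. For ``$\subseteq$'', take $v^\ast\in(B^{-1}(\tilde T))^\circ$. Since $0\in\tilde T$ we have $\ker B\subseteq B^{-1}(\tilde T)$, and as $\ker B$ is a subspace the inequality $\skalp{v^\ast,u}\le0$ on it forces $v^\ast\in(\ker B)^\perp=\range B^T$; write $v^\ast=B^T\lambda$ with $\lambda\in M$. For arbitrary $t\in\tilde T$, surjectivity of $B$ yields $u$ with $Bu=t$, so $u\in B^{-1}(\tilde T)$ and $\skalp{\lambda,t}=\skalp{B^T\lambda,u}=\skalp{v^\ast,u}\le0$; thus $\lambda\in\tilde T^{\circ}=\NrD$ and $v^\ast=A^T\lambda\in A^T\NrD$, as required.

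The main obstacle I anticipate is that $\TD$ (hence $\Tlin(\xb)$) is in general nonconvex, so the unqualified polar-of-preimage rule only delivers an inclusion up to taking closures and is delicate for nonconvex cones. The role of the two hypotheses is exactly to remove these difficulties: $L$-invariance of $\TD$ confines $\NrD$ to $L^\perp$, the transversality \eqref{EqSurjectivity} makes $A^T$ injective there (so that $A^T\NrD$ is automatically closed) and renders $B=PA$ surjective, and it is surjectivity that allows the lifting argument to transfer the inequalities from $\tilde T$ to $B^{-1}(\tilde T)$ without any convex-hull or closure loss. Verifying carefully that $T_\Omega(\xb)=\Tlin(\xb)$ under metric subregularity is the other step demanding attention, though it is a by-now standard consequence of the subregularity estimate.
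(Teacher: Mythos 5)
First, a point of reference: the paper does not prove this theorem at all --- it is imported verbatim from \cite[Theorem 4]{GfrOut16b} --- so there is no internal proof to compare your argument against; it can only be judged on its own merits. On those merits, your proof is correct and self-contained.

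Your argument splits into two halves, both of which check out. The first half (metric subregularity of $x\rightrightarrows F(x)-D$ forces $T_\Omega(\xb)=\Tlin(\xb)$, hence $\widehat N_\Omega(\xb)=(\Tlin(\xb))^\circ$) is the standard subregularity-implies-GACQ argument, which the paper itself attributes to \cite[Proposition 1]{HenOut05}; your $o(t_k)$ estimate and the passage back to tangent elements of $\Omega$ are sound. The second half is a genuine reduction to the surjective case: from $\TD+L=\TD$ and $0\in\TD$ you get $L\subset\TD$, hence $\NrD\subset L^\perp=:M$; with $P$ the orthogonal projector onto $M$ and $B:=P\nabla F(\xb)$, the $L$-invariance gives $P\,\TD=\TD\cap M=:\tilde T$, so that $\nabla F(\xb)u\in\TD\Leftrightarrow Bu\in\tilde T$ and $\TD^\circ$ coincides with the polar of $\tilde T$ taken inside $M$, while \eqref{EqSurjectivity} makes $B$ surjective onto $M$. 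The reduced identity $(B^{-1}(\tilde T))^\circ=B^T(\tilde T^\circ)$ that you then establish is precisely the linear, full-rank case that the paper quotes from \cite[Exercise 6.7]{RoWe98}, and your lifting proof of it is valid with no convexity or polyhedrality assumption on $\tilde T$: the only ingredients are $0\in\tilde T$ (so $\ker B\subset B^{-1}(\tilde T)$, forcing $v^\ast\in\range B^T$) and surjectivity of $B$ (to transfer the sign condition from $B^{-1}(\tilde T)$ to $\tilde T$). The two places where care was genuinely needed --- taking the polar of $\tilde T$ relative to $M$ rather than $\R^m$, and identifying $A^T\lambda$ with $B^T\lambda$ only for $\lambda\in M$ --- are both handled explicitly in your write-up. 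In short, you use the hypotheses \eqref{EqLinealitySpace}--\eqref{EqSurjectivity} to factor out the lineality subspace $L$ and reduce the theorem to the known surjective case; this is a clean and correct route, and it works for general closed $D$, matching the generality of the cited result.
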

In order to state an upper estimate for the regular normal cone $\widehat N_\Omega(\xb)$ we need some constraint qualification.
\begin{definition}[{\cite[Definition 6]{FleKanOut07}}]
Let $\Omega$ be given by \eqref{EqFeasReg} and let $\xb\in\Omega$.
\begin{enumerate}
\item We say that the {\em generalized Abadie constraint qualification} (GACQ) holds at $\xb$ if
\begin{equation}\label{EqGACQ}
  T_\Omega(\xb)=\Tlin(\xb),
\end{equation}
where $\Tlin(\xb):=\{u\in\R^n\mv  \nabla F(\xb)u\in T_D(F(\xb))\}$ denotes the {\em linearized  cone}.
\item We say that the {\em generalized Guignard constraint qualification} (GGCQ) holds at $\xb$ if
\begin{equation}\label{EqGGCQ}
  (T_\Omega(\xb))^\circ=(\Tlin(\xb))^\circ.
\end{equation}
\end{enumerate}
\end{definition}

Obviously GGCQ is weaker than GACQ, but GACQ is easier to verify by using some advanced tools of variational analysis. E.g., if the mapping $x\rightrightarrows F(x)-D$ is metrically subregular at $(\xb,0)$ then GACQ is fulfilled at $\xb$, cf. \cite[Proposition 1]{HenOut05}. Tools for verifying metric subregularity of constraint systems can be found e.g. in \cite{GfrKl16}.
\begin{proposition}[{\cite[Proposition 3]{BeGfr16b}}]\label{PropImprInclLimNormalCone}Let $\Omega$ be given by \eqref{EqFeasReg}, let $\xb\in \Omega$ and assume that GGCQ is fulfilled, while the mapping $u\rightrightarrows\nabla  F(\xb)u-\TD$ is metrically subregular at $(0,0)$. Then
\begin{equation}\label{EqUpperIncl}\widehat N_\Omega(\xb)\subset  \nabla F(\xb)^T N_{\TD}(0)\subset \nabla F(\xb)^T N_D(F(\xb)).
\end{equation}
\end{proposition}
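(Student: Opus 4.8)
\emph{Proof proposal.} The plan is to establish the two inclusions separately, the first being the substantive one. For the first inclusion I start from GGCQ: by the definition of the regular normal cone as the polar of the contingent cone we have $\widehat N_\Omega(\xb)=(T_\Omega(\xb))^\circ$, and \eqref{EqGGCQ} turns this into $\widehat N_\Omega(\xb)=(\Tlin(\xb))^\circ$. Since $\Tlin(\xb)=\{u\mv\nabla F(\xb)u\in\TD\}$ is a cone, its polar coincides with its regular normal cone at the origin, so it remains to prove the chain-rule inclusion $\widehat N_{\Tlin(\xb)}(0)\subseteq\nabla F(\xb)^T N_{\TD}(0)$. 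To lighten notation I write $A:=\nabla F(\xb)$ and $K:=\TD$, so that $\Tlin(\xb)=\{u\mv Au\in K\}$.

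Metric subregularity of $u\rightrightarrows Au-K$ at $(0,0)$ provides $\kappa>0$ and a neighborhood $U$ of $0$ with $\dist{u,\Tlin(\xb)}\le\kappa\,\dist{Au,K}$ for all $u\in U$, since $(A\cdot-K)^{-1}(0)=\Tlin(\xb)$; because $\Tlin(\xb)$ and $K$ are cones and $A$ is linear, both sides are positively homogeneous, so this estimate in fact holds for all $u$. Now fix $v^\ast\in\widehat N_{\Tlin(\xb)}(0)=(\Tlin(\xb))^\circ$, so that $\skalp{v^\ast,u}\le0$ for every $u\in\Tlin(\xb)$. The key device is an exact penalty: I consider the Lipschitz function $\phi(u):=-\skalp{v^\ast,u}+\kappa\norm{v^\ast}\,\dist{Au,K}$. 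Replacing $u$ by its projection onto $\Tlin(\xb)$ and combining the subregularity estimate with $\skalp{v^\ast,\cdot}\le0$ on $\Tlin(\xb)$ yields $\phi(u)\ge0=\phi(0)$ near $0$, so that $0$ is a local minimizer of $\phi$.

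I then apply the generalized Fermat rule $0\in\partial\phi(0)$ and evaluate this subdifferential by the sum rule (the first term is smooth with gradient $-v^\ast$) and by the chain rule for the composition $\dist{\cdot,K}\circ A$ with the linear map $A$; since $\dist{\cdot,K}$ is globally Lipschitz its singular subdifferential is $\{0\}$, the attendant qualification condition is automatic, and hence $\partial(\dist{\cdot,K}\circ A)(0)\subseteq A^T\partial\,\dist{\cdot,K}(0)\subseteq A^T\big(N_K(0)\cap\B(0,1)\big)$. Combining these facts gives $v^\ast\in A^TN_K(0)=\nabla F(\xb)^TN_{\TD}(0)$, which is the first inclusion. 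For the second inclusion it suffices to verify the purely geometric fact $N_{\TD}(0)\subseteq N_D(F(\xb))$ and then apply $\nabla F(\xb)^T$. Both regular normal cones $\widehat N_D(F(\xb))$ and $\widehat N_{\TD}(0)$ already coincide, being equal to $(\TD)^\circ$; the content is to propagate this through the outer-limit construction. I would show that a regular normal to $\TD$ at a point $v\in\TD$ near $0$ is a limiting normal to $D$ at $F(\xb)$ by using the definition of the tangent cone to approximate $v$ along $D$ via $F(\xb)+t_kw_k\in D$ and transferring regular normals, and then pass to the limit, using closedness of $N_D(F(\xb))$.

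The main obstacle is the first inclusion, specifically converting metric subregularity into the exact-penalty inequality and then discharging the nonsmooth calculus correctly, namely the sum and chain rules for the limiting subdifferential and the upgrade of the local subregularity estimate to a global one via positive homogeneity. The second inclusion is comparatively routine.
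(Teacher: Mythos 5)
Your argument is correct, but there is nothing in the paper to compare it against line by line: the paper does not prove this proposition at all, it imports it verbatim from \cite[Proposition 3]{BeGfr16b}, and it handles the second inclusion by simply citing \cite[Proposition 6.27]{RoWe98} in the remark that follows. What you have done is reconstruct, in a self-contained way, the standard mechanism hidden behind that citation. Your route --- GGCQ to pass from $\widehat N_\Omega(\xb)=(T_\Omega(\xb))^\circ$ to $(\Tlin(\xb))^\circ=\widehat N_{\Tlin(\xb)}(0)$; metric subregularity upgraded to a global estimate by positive homogeneity of both sides; the exact penalty $\phi(u)=-\skalp{v^\ast,u}+\kappa\norm{v^\ast}\dist{\nabla F(\xb)u,\TD}$ minimized at $0$; then Fermat's rule, the sum rule with a smooth summand, the Lipschitz chain rule, and $\partial\,\dist{\cdot,\TD}(0)=N_{\TD}(0)\cap\B(0,1)$ --- is exactly the classical ``subregularity/calmness implies the normal-cone inclusion for preimages'' argument going back to Ioffe and Henrion--Outrata, and every step checks out: projection onto the closed (possibly nonconvex) cone $\Tlin(\xb)$ exists even without convexity, the chain-rule qualification is automatic because the distance function is Lipschitz (so its horizon subdifferential is trivial), and the unit ball is absorbed since $N_{\TD}(0)$ is a cone. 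The only thin spot is your sketch of $N_{\TD}(0)\subset N_D(F(\xb))$: the step where a regular normal to $\TD$ at a point near $0$ becomes a limiting normal to $D$ at $F(\xb)$ is genuinely technical, and is precisely the content of \cite[Proposition 6.27]{RoWe98}; since both this paper and its source treat that as a quotable fact, citing it rather than re-deriving it is the cleaner course. In short, the paper buys the result with a citation while you pay full price for the substantive first inclusion; your version has the merit of making visible exactly where metric subregularity of the linearized mapping enters, which the paper's presentation leaves opaque.
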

Note that we always have $N_{\TD}(0)\subset N_D(F(\xb))$, see \cite[Proposition 6.27]{RoWe98}. However, if $D$ is the union of finitely many convex polyhedral sets, then equality
\begin{equation}\label{EqEqualityLinNormalCone}N_{\TD}(0)= N_D(F(\xb))\end{equation} holds. This is due to the fact that by the assumption on $D$ there is some neighborhood $V$ of $0$ such that  $(D-F(\xb))\cap V=\TD\cap V$.

Let us mention that metric subregularity of the constraint mapping $x\rightrightarrows F(x)-D$ at $(\xb,0)$ does not only imply GACQ and consequently GGCQ, but also metric subregularity of the mapping $u\rightrightarrows\nabla  F(\xb)u-\TD$ at $(0,0)$ with the same modulus, see \cite[Proposition 2.1]{Gfr11}.

The concept of metric subregularity has the drawback that, in general, it is not stable under small perturbations. It is well known that the stronger property of metric regularity is robust.
\begin{definition}
  A multifunction $\Psi:\R^n\rightrightarrows \R^m$ is called {\em metrically regular} near a point $(\xb,\yb)$ of its graph $\Gr\Psi$ with modulus $\kappa>0$, if there are neighborhoods $U$ of $\xb$ and $V$ of $\yb$ such that
  \[\dist{x,\Psi^{-1}(y)}\leq \kappa\dist{y,\Psi(x)}\ \forall (x,y)\in U\times V.\]
  The infimum of the moduli $\kappa$ for which the property of metric regularity  holds is denoted by
  \[{\rm reg\;}\Psi(\xb,\yb).\]
\end{definition}

In the following proposition we gather some well-known properties of metric regularity:
\begin{proposition}\label{PropMetrReg}
  Let $\xb\in F^{-1}(D)$ where $F:\R^n\to\R^m$ is continuously differentiable and $D$ is the union of finitely many convex polyhedral sets and consider the multifunctions $x\rightrightarrows \Psi(x):=F(x)-D$ and $u\rightrightarrows D\Psi(\xb)(u):= \nabla F(\xb)u-T_D(F(\xb))$. Then
  \[{\rm reg\;} \Psi(\xb,\yb)={\rm reg\;} D\Psi(\xb)(0,0)=\max\Big\{\frac{1}{\norm{\nabla F(\xb)^T\lambda}}\mv \lambda\in N_D(F(\xb))=N_{\TD}(0),\ \norm{\lambda}=1\Big\}.\]
  Moreover for every $\kappa> {\rm reg\;} \Psi(\xb,\yb)$ there is a neighborhood $W$ of $\xb$ such that for all $x\in W$ the mapping $u\rightrightarrows \nabla F(x)u-\TD$ is metrically regular near $(0,0)$ with modulus $\kappa$,
  \begin{equation}\label{EqBndLambdaMetrReg}
    \norm{\lambda}\leq\kappa\norm{\nabla F(x)^T\lambda}\ \forall \lambda\in N_D(F(\xb))=N_{\TD}(0)
  \end{equation}
  and
  \[\dist{u,\nabla F(x)^{-1}\TD}\leq \kappa \dist{\nabla F(x)u,\TD}\ \forall u\in\R^n.\]
\end{proposition}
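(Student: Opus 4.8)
The plan is to read the whole statement off the Mordukhovich coderivative criterion for metric regularity together with its exact modulus formula (\cite[Theorem 9.43]{RoWe98}), after computing the coderivatives of $\Psi$ and of $D\Psi(\xb)$ and observing that they coincide. The base point for $\Psi$ is $(\xb,\yb)=(\xb,0)\in\Gr\Psi$, since $\xb\in F^{-1}(D)$. Writing $\Gr\Psi=\{(x,y)\mv G(x,y)\in D\}$ with $G(x,y):=F(x)-y$, the partial derivative of $G$ in $y$ equals $-I$, so $\nabla G(\xb,0)$ is surjective and the normal-cone chain rule applies with equality and without any extra qualification condition. This gives $N_{\Gr\Psi}(\xb,0)=\{(\nabla F(\xb)^T\lambda,-\lambda)\mv\lambda\in N_D(F(\xb))\}$, hence $D^*\Psi(\xb,0)(\lambda)=\{\nabla F(\xb)^T\lambda\}$ for $\lambda\in N_D(F(\xb))$ and $=\emptyset$ otherwise. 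The same computation for the positively homogeneous map $D\Psi(\xb)$, whose graph is $\{(u,v)\mv\nabla F(\xb)u-v\in\TD\}$, gives $D^*D\Psi(\xb)(0,0)(\lambda)=\{\nabla F(\xb)^T\lambda\}$ for $\lambda\in N_{\TD}(0)$; by the polyhedral identity \eqref{EqEqualityLinNormalCone} the domains $N_D(F(\xb))$ and $N_{\TD}(0)$ coincide, so the two coderivatives are literally the same single-valued map.

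Inserting this into the modulus formula yields ${\rm reg\;}\Psi(\xb,0)=\sup\{\norm{\lambda}\mv\lambda\in N_D(F(\xb)),\ \norm{\nabla F(\xb)^T\lambda}\le1\}$, and rescaling each nonzero $\lambda$ to the unit sphere by positive homogeneity rewrites this as $\max\{1/\norm{\nabla F(\xb)^T\lambda}\mv\lambda\in N_D(F(\xb)),\ \norm{\lambda}=1\}$; the maximum is attained because the unit sphere of $N_D(F(\xb))$ is compact and $\lambda\mapsto\norm{\nabla F(\xb)^T\lambda}$ is continuous. Since the coderivatives agree, the identical value is produced for ${\rm reg\;}D\Psi(\xb)(0,0)$, which is the first display.

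For the robustness part, the same computation with $\nabla F(\xb)$ replaced by $\nabla F(x)$ but the normal cone frozen at $\xb$ shows that the modulus of $u\rightrightarrows\nabla F(x)u-\TD$ equals $\max\{1/\norm{\nabla F(x)^T\lambda}\mv\lambda\in N_D(F(\xb)),\ \norm{\lambda}=1\}$. Fixing $\kappa>{\rm reg\;}\Psi(\xb,0)$ means $\norm{\nabla F(\xb)^T\lambda}>1/\kappa$ for every unit $\lambda\in N_D(F(\xb))$; by continuity of $\nabla F$ and compactness of this sphere the inequality persists uniformly on a neighborhood $W$ of $\xb$, which after clearing homogeneity is exactly \eqref{EqBndLambdaMetrReg} and keeps the above modulus below $\kappa$, i.e. metric regularity near $(0,0)$ with modulus $\kappa$. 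The pointwise estimate $\dist{u,\nabla F(x)^{-1}\TD}\le\kappa\,\dist{\nabla F(x)u,\TD}$ then follows by specializing metric regularity to $v=0$ near the origin and extending to all $u\in\R^n$: both sides are positively homogeneous of degree one in $u$, as $\TD$ and $\nabla F(x)^{-1}\TD$ are cones, so the local inequality propagates by scaling. I expect the only delicate points to be the bookkeeping in the modulus formula — the direction of the supremum and the passage from the normalization $\norm{\nabla F(\xb)^T\lambda}\le1$ to $\norm{\lambda}=1$ — and the observation that for a positively homogeneous multifunction local metric regularity at the origin already yields the global distance estimate; the perturbation step is then a routine continuity-and-compactness argument.
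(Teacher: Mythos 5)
Your proposal is correct and takes essentially the same route as the paper: the paper's one-line proof simply cites \cite[Exercise 9.44]{RoWe98} — which is precisely the specialization of the Mordukhovich coderivative criterion to constraint systems $x\rightrightarrows F(x)-D$ — together with the polyhedral identity $N_{\TD}(0)=N_D(F(\xb))$ and the fact that $\TD$ is a cone. You re-derive that cited result by hand (coderivative computation via the surjectivity chain rule, modulus formula, compactness/continuity for the uniform neighborhood bound, positive homogeneity for the global distance estimate), so your argument is a correct, self-contained unpacking of the same machinery.
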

\begin{proof}
  The statement follows from \cite[Exercise 9.44]{RoWe98} together with the facts that by our assumption on $D$ condition \eqref{EqEqualityLinNormalCone} holds
  and  that $\TD$ is a cone.
\end{proof}

We now recall some well known stationarity concepts based on the considerations above.
\begin{definition}
  Let $\xb$ be feasible for the program \eqref{EqGenProg}.
\begin{enumerate}
  \item[(i)]We say that $\xb$ is {\em B-stationary}, if \eqref{EqBstatPrim} or, equivalently, \eqref{EqBstatDual} hold.
  \item[(ii)]We say that $\xb$ is S-stationary, if
  \[-\nabla f(\xb)\in \nabla F(\xb)^T\widehat N_D(F(\xb)).\]
  \item[(iii)]We say that $\xb$ is M-stationary, if
  \[-\nabla f(\xb)\in \nabla F(\xb)^T N_D(F(\xb)).\]
\end{enumerate}
\end{definition}
S- and M-stationarity have been introduced in \cite{FleKanOut07} as a generalization of these notions for MPCC. Using the inclusion \eqref{EqMPCC} it immediately follows, that S-stationarity implies B-stationarity. However the reverse implication only holds true under some additional condition on the constraints, e.g. under the assumptions of Theorem \ref{ThSuffCondSstat}. A B-stationary point is M-stationary under the assumptions of Proposition \ref{PropImprInclLimNormalCone}. However, the inclusion $\widehat N_\Omega(\xb)\subset \nabla F(\xb)^T N_D(F(\xb))$
can be strict, implying that a M-stationary point $\xb$ needs not to be B-stationary. Hence M-stationarity does eventually not preclude the existence of feasible descent directions, i.e.  directions $u\in T_\Omega(\xb)$ with $\skalp{\nabla f(\xb),u}<0$.

\section{On $\Q$- and $\Q_M$-stationarity}

In this section we consider an extension of the concept of $\Q$-stationarity as introduced in the recent paper \cite{BeGfr16b}. $\Q$-stationarity is based on the following simple observation.

Consider the general program \eqref{EqGenProg}, assume that GGCQ holds at the point $\xb\in\Omega$ and assume that we are given $K$ convex cones $Q_i\subset T_D(F(\xb))$, $i=1,\ldots, K$. Then for each $i=1,\ldots,K$ we obviously have $\Tlin(\xb)=\nabla F(\xb)^{-1}T_D(F(\xb))\supset \nabla F(\xb)^{-1}Q_i$ implying
\[\widehat N_\Omega(\xb)=(\Tlin(\xb))^\circ\subset (F(\xb)^{-1}Q_i)^\circ.\]
If we further assume that $(F(\xb)^{-1}Q_i)^\circ=\nabla F(\xb)^TQ_i^\circ$ and by taking into account, that by \cite[Lemma 1]{BeGfr16b} we have
\[(\nabla F(\xb)^TS_1)\cap(\nabla F(\xb)^T S_2)=\nabla F(\xb)^T\big(S_1\cap(\ker \nabla F(\xb)^T+S_2)\big)\]
for arbitrary sets $S_1,S_2\subset \R^m$, we obtain
\begin{eqnarray*}
  \widehat N_\Omega(\xb)&\subset&\bigcap_{i=1}^K\nabla F(\xb)^TQ_i^\circ=\nabla F(\xb)^T\big(Q_1^\circ \cap(\ker\nabla F(\xb)^T+Q_2^\circ)\big)\cap \bigcap_{i=3}^K\nabla F(\xb)^TQ_i^\circ\\
  &=&\nabla F(\xb)^T\big(Q_1^\circ \cap(\ker\nabla F(\xb)^T+Q_2^\circ)\cap(\ker\nabla F(\xb)^T+Q_3^\circ)\big)\cap \bigcap_{i=4}^K\nabla F(\xb)^TQ_i^\circ=\ldots\\
  &=&\nabla F(\xb)^T\big(Q_1^\circ \cap \bigcap_{i=2}^K(\ker\nabla F(\xb)^T+Q_i^\circ)\big).
\end{eqnarray*}
Here we use the convention that for for sets $S_1,\ldots,S_K\subset\R^m$ we set $\bigcap_{i=l}^K S_i=\R^m$ for $l>K$.
It is an easy consequence of \eqref{EqLowerIncl}, that equality holds in this inclusion, provided $\nabla F(\xb)^T\big(Q_1^\circ \cap \bigcap_{i=2}^K(\ker\nabla F(\xb)^T+Q_i^\circ)\big)\subset
\nabla F(\xb)^T\widehat N_D(F(\xb)$. Hence we have shown the following theorem.

\begin{theorem}\label{ThBasisQStat}Assume that GGCQ holds at $\xb\in \Omega$ and assume that $Q_1,\ldots,Q_K$ are convex cones contained in $\TD$. If
\begin{equation}\label{EqAssQ}
  (\nabla F(\xb)^{-1}Q_i)^\circ=\nabla F(\xb)^TQ_i^\circ,\ i=1,\ldots,K,
\end{equation}
then
\begin{equation}
  \label{EqInclRegNormalConeQ}
   \widehat N_\Omega(\xb)\subset \nabla F(\xb)^T\big(Q_1^\circ \cap \bigcap_{i=2}^K(\ker\nabla F(\xb)^T+Q_i^\circ)\big)=\bigcap_{i=1}^K\nabla F(\xb)^TQ_i^\circ.
\end{equation}
Further, if
\begin{equation}\label{EqEqualityQStat}\nabla F(\xb)^T\big(Q_1^\circ \cap \bigcap_{i=2}^K(\ker\nabla F(\xb)^T+Q_i^\circ)\big)\subset \nabla F(\xb)^T\NrD,
\end{equation}
 then equality holds in \eqref{EqInclRegNormalConeQ} and $\widehat N_\Omega(\xb)=\nabla F(\xb)^T\NrD$.
\end{theorem}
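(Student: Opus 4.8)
The plan is to exploit GGCQ to express the regular normal cone purely through the linearized cone, and then to convert the hypothesis $Q_i\subset\TD$ into a reverse inclusion of polars. First I would observe that, since each $Q_i$ is contained in $\TD$, taking preimages under $\nabla F(\xb)$ gives $\nabla F(\xb)^{-1}Q_i\subset\nabla F(\xb)^{-1}\TD=\Tlin(\xb)$. Polarity reverses inclusions, so $(\Tlin(\xb))^\circ\subset(\nabla F(\xb)^{-1}Q_i)^\circ$, while GGCQ together with the dual description of the regular normal cone yields $\widehat N_\Omega(\xb)=(T_\Omega(\xb))^\circ=(\Tlin(\xb))^\circ$. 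Combining these with the standing assumption \eqref{EqAssQ} then delivers $\widehat N_\Omega(\xb)\subset(\nabla F(\xb)^{-1}Q_i)^\circ=\nabla F(\xb)^TQ_i^\circ$ for every $i=1,\ldots,K$, and hence $\widehat N_\Omega(\xb)\subset\bigcap_{i=1}^K\nabla F(\xb)^TQ_i^\circ$.

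It remains to recast this intersection in the chained form appearing in \eqref{EqInclRegNormalConeQ}. For this I would invoke the identity $(\nabla F(\xb)^TS_1)\cap(\nabla F(\xb)^TS_2)=\nabla F(\xb)^T(S_1\cap(\ker\nabla F(\xb)^T+S_2))$ from \cite[Lemma 1]{BeGfr16b} and apply it inductively: after absorbing $Q_2^\circ$ into $Q_1^\circ$ one is left with an expression $\nabla F(\xb)^T(\cdots)\cap\bigcap_{i=3}^K\nabla F(\xb)^TQ_i^\circ$ of the same shape, so repeating the step $K-1$ times collapses the whole intersection to $\nabla F(\xb)^T\big(Q_1^\circ\cap\bigcap_{i=2}^K(\ker\nabla F(\xb)^T+Q_i^\circ)\big)$. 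This is a purely set-algebraic identity, independent of the normal-cone inclusion, and it establishes the displayed equality in \eqref{EqInclRegNormalConeQ}.

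For the equality assertion I would sandwich the regular normal cone. The lower inclusion \eqref{EqLowerIncl} always gives $\nabla F(\xb)^T\NrD\subset\widehat N_\Omega(\xb)$. Under the additional hypothesis \eqref{EqEqualityQStat}, the upper estimate just obtained chains with \eqref{EqEqualityQStat} to produce $\widehat N_\Omega(\xb)\subset\nabla F(\xb)^T\big(Q_1^\circ\cap\bigcap_{i=2}^K(\ker\nabla F(\xb)^T+Q_i^\circ)\big)\subset\nabla F(\xb)^T\NrD$. Placing this between the two halves of the sandwich forces equality throughout, so that $\widehat N_\Omega(\xb)=\nabla F(\xb)^T\NrD$ and, in particular, all inclusions in \eqref{EqInclRegNormalConeQ} become equalities.

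The computations here are light, so the only genuine point requiring care is the bookkeeping in the inductive application of the intersection identity: keeping track of which factors have already been merged and verifying that each intermediate expression retains exactly the form needed to apply \cite[Lemma 1]{BeGfr16b} again. Everything else follows directly from GGCQ, the monotonicity of polarity, and the two standing assumptions \eqref{EqAssQ} and \eqref{EqEqualityQStat}.
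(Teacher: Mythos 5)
Your proposal is correct and follows essentially the same route as the paper's own derivation: GGCQ plus polarity and \eqref{EqAssQ} to get $\widehat N_\Omega(\xb)\subset\bigcap_{i=1}^K\nabla F(\xb)^TQ_i^\circ$, the inductive application of \cite[Lemma 1]{BeGfr16b} to collapse this intersection into the chained form, and the sandwich with the lower inclusion \eqref{EqLowerIncl} under \eqref{EqEqualityQStat} to force equality. No gaps; this matches the argument preceding the theorem in the paper.
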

\begin{remark}\label{RemPolarCone}Condition \eqref{EqAssQ} is e.g. fulfilled, if for each $i=1,\ldots,K$ either there is a direction $u_i$ with $\nabla F(\xb)u_i\in\ri Q_i$ or $Q_i$ is a convex polyhedral set, cf. \cite[Proposition 1]{BeGfr16b}.
\end{remark}
The proper choice of $Q_1,\ldots,Q_K$ is crucial in order that \eqref{EqInclRegNormalConeQ} provides a good estimate for the regular normal cone. It is obvious that we want to choose the cones $Q_i$, $i=1,\ldots,K$ as large as possible in order that the inclusion \eqref{EqInclRegNormalConeQ} is tight. Further it is reasonable that a good choice of $Q_1,\ldots,Q_K$ fulfills
\begin{equation}
  \label{EqNecEqual}\bigcap_{i=1}^K Q_i^\circ=\NrD
\end{equation}
because then equation \eqref{EqEqualityQStat} holds whenever $\nabla F(\xb)$ has full rank. We now show that  \eqref{EqEqualityQStat} holds not only under this full rank condition but also under some weaker assumption.
\begin{theorem}\label{ThQstatEquSStat}Assume that GGCQ holds at $\xb\in \Omega$ and assume that we are given convex cones $Q_1,\ldots,Q_K\subset \TD$ fulfilling \eqref{EqAssQ}, \eqref{EqNecEqual} and
\begin{equation}\label{EqQStatSStat}\ker\nabla F(\xb)^T\cap(Q_1^\circ-Q_i^\circ)=\{0\}, i=2,\ldots,K.\end{equation}
Then
\[\widehat N_\Omega(\xb)=\nabla F(\xb)^T\NrD=\nabla F(\xb)^T\big(Q_1^\circ \cap \bigcap_{i=2}^K(\ker\nabla F(\xb)^T+Q_i^\circ)\big).\]
In particular, \eqref{EqQStatSStat} holds if there is a subspace
\begin{equation}\label{EqLsubsetQ}
L\subset \bigcap_{i=1}^K\big(Q_i\cap (-Q_i)\big)
\end{equation}
such that \eqref{EqSurjectivity} holds.
\end{theorem}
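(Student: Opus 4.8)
The plan is to reduce everything to Theorem \ref{ThBasisQStat}. Since GGCQ and \eqref{EqAssQ} are in force, that theorem already delivers the inclusion \eqref{EqInclRegNormalConeQ}; to upgrade it to the asserted equality it suffices, again by Theorem \ref{ThBasisQStat}, to verify the sufficient condition \eqref{EqEqualityQStat}. In fact I would establish the sharper set identity
\[
Q_1^\circ \cap \bigcap_{i=2}^K\big(\ker\nabla F(\xb)^T + Q_i^\circ\big) = \bigcap_{i=1}^K Q_i^\circ = \NrD,
\]
where the rightmost equality is precisely \eqref{EqNecEqual}. Applying $\nabla F(\xb)^T$ to this identity then yields \eqref{EqEqualityQStat}, and Theorem \ref{ThBasisQStat} gives both $\widehat N_\Omega(\xb)=\nabla F(\xb)^T\NrD$ and the representation via the middle term.

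The inclusion $\supset$ in the displayed identity is immediate because $Q_i^\circ \subset \ker\nabla F(\xb)^T + Q_i^\circ$ for every $i$. For the reverse inclusion I would pick an arbitrary $v^\ast$ in the left-hand set. Membership in $\ker\nabla F(\xb)^T + Q_i^\circ$ for each $i\in\{2,\ldots,K\}$ lets me write $v^\ast = k_i + q_i^\ast$ with $k_i\in\ker\nabla F(\xb)^T$ and $q_i^\ast\in Q_i^\circ$, so that $v^\ast - q_i^\ast = k_i \in \ker\nabla F(\xb)^T$. Since $v^\ast\in Q_1^\circ$ and $q_i^\ast\in Q_i^\circ$, the difference also lies in $Q_1^\circ - Q_i^\circ$, hence $v^\ast - q_i^\ast \in \ker\nabla F(\xb)^T \cap (Q_1^\circ - Q_i^\circ) = \{0\}$ by the hypothesis \eqref{EqQStatSStat}. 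Consequently $v^\ast = q_i^\ast \in Q_i^\circ$ for all $i\ge 2$, and together with $v^\ast\in Q_1^\circ$ this gives $v^\ast\in\bigcap_{i=1}^K Q_i^\circ$. This is the heart of the matter: condition \eqref{EqQStatSStat} is exactly what forces the thickened cones $\ker\nabla F(\xb)^T + Q_i^\circ$ to collapse back onto $Q_i^\circ$ on the relevant intersection.

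For the concluding \emph{in particular} claim I would translate the geometric hypotheses into statements about polars and kernels. Since $L$ is a subspace with $L\subset Q_i\cap(-Q_i)\subset Q_i$, polarity reverses the inclusion to $Q_i^\circ \subset L^\circ = L^\perp$ for every $i$; as $L^\perp$ is a subspace this yields $Q_1^\circ - Q_i^\circ \subset L^\perp$. Taking orthogonal complements in the surjectivity condition \eqref{EqSurjectivity}, $\nabla F(\xb)\R^n + L = \R^m$, and using $(\range\nabla F(\xb))^\perp = \ker\nabla F(\xb)^T$, I obtain $\ker\nabla F(\xb)^T \cap L^\perp = \{0\}$. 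Combining the two gives $\ker\nabla F(\xb)^T \cap (Q_1^\circ - Q_i^\circ) \subset \ker\nabla F(\xb)^T \cap L^\perp = \{0\}$, which is exactly \eqref{EqQStatSStat}.

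I do not anticipate a serious obstacle here: once the reduction to Theorem \ref{ThBasisQStat} is made, the only genuinely new content is the collapse argument of the second paragraph, and it is a short linear-algebraic manipulation driven entirely by \eqref{EqQStatSStat}. The only point requiring mild care is the bookkeeping with polars of subspaces, namely the identity $L^\circ = L^\perp$ and the stability of $L^\perp$ under differences, used in the final part.
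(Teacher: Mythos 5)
Your proposal is correct and follows essentially the same route as the paper: reduce to Theorem \ref{ThBasisQStat} by verifying \eqref{EqEqualityQStat} via the decomposition $v^\ast=k_i+q_i^\ast$ and the collapse forced by \eqref{EqQStatSStat} (the paper phrases this with multipliers $\lambda^1=\mu^i+\lambda^i$ inside the image of $\nabla F(\xb)^T$, which is the same computation), and then derive \eqref{EqQStatSStat} from \eqref{EqLsubsetQ} and \eqref{EqSurjectivity} by the identical polarity argument $Q_1^\circ-Q_i^\circ\subset L^\perp$ and $\ker\nabla F(\xb)^T\cap L^\perp=(\nabla F(\xb)\R^n+L)^\perp=\{0\}$. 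The only cosmetic difference is that you prove the set identity before applying $\nabla F(\xb)^T$, while the paper argues elementwise on the image; both are valid.
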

\begin{proof}
  The statement follows from Theorem \ref{ThBasisQStat} if we can show that  \eqref{EqEqualityQStat} holds. Consider $x^\ast\in \nabla F(\xb)^T\big(Q_1^\circ \cap \bigcap_{i=2}^K(\ker\nabla F(\xb)^T+Q_i^\circ)\big)$. Then there are elements $\lambda^i\in Q_i^\circ$, $i=1,\ldots,K$ and $\mu^i\in \ker\nabla F(\xb)^T$ such that $\lambda^1=\mu^i+\lambda^i$, $i=2,\ldots,K$ and $x^\ast=\nabla F(\xb)^T\lambda^1$. We conclude $\mu^i=\lambda^1-\lambda^i\in Q_1^\circ-Q_i^\circ$, implying $\mu^i\in\ker \nabla F(\xb)^T\cap(Q_1^\circ-Q_i^\circ)=\{0\}$ and thus \[\lambda^1=\lambda^2=\ldots=\lambda^K\in\bigcap_{i=1}^K Q_i^\circ=\NrD.\]
  Hence $x^\ast\in \nabla F(\xb)^T\NrD$ and \eqref{EqEqualityQStat} is verified.
  In order to show the last assertion note that from \eqref{EqLsubsetQ} we conclude $L\subset Q_i$ and consequently $Q_i^\circ\subset L^\circ=L^\perp$. Thus $Q_1^\circ-Q_i^\circ\subset L^\perp-L^\perp=L^\perp$, $i=2,\ldots,K$.
  Since
  \[\ker \nabla F(\xb)^T\cap L^\perp=\big((\ker \nabla F(\xb)^T)^\perp +L\big)^\perp=(\nabla F(\xb)\R^n+L)^\perp=\{0\},\]
  it follows  that \eqref{EqQStatSStat} holds.
\end{proof}
\begin{corollary}\label{CorQstatEquSStat}Assume that GGCQ holds at $\xb\in \Omega$ and assume that we are given convex cones $Q_1,\ldots,Q_K\subset \TD$ fulfilling \eqref{EqAssQ} and \eqref{EqNecEqual}. Further assume that there is some subspace $L$ fulfilling \eqref{EqLinealitySpace} and \eqref{EqSurjectivity}. Then the sets
\[\tilde Q_i:=Q_i+L,\ i=1,\ldots,K\]
are  convex cones contained in $\TD$,
\begin{equation}\label{EqAssQ1}(\nabla F(\xb)^{-1}\tilde Q_i)^\circ=\nabla F(\xb)^T\tilde Q_i^\circ, i=1,\ldots,K\end{equation}
 and
\[\widehat N_\Omega(\xb)=\nabla F(\xb)^T\NrD=\nabla F(\xb)^T\big(\tilde Q_1^\circ \cap \bigcap_{i=2}^K(\ker\nabla F(\xb)^T+\tilde Q_i^\circ)\big).\]
\end{corollary}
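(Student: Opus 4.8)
The plan is to reduce everything to Theorem~\ref{ThQstatEquSStat}: I would verify that the enlarged cones $\tilde Q_i=Q_i+L$ satisfy all of its hypotheses, and then simply quote its conclusion. First I would record the elementary structural facts. Since each $Q_i$ is a convex cone and $L$ is a subspace, the Minkowski sum $\tilde Q_i=Q_i+L$ is again a convex cone; moreover, because \eqref{EqLinealitySpace} gives $\TD+L\subset\TD$ while $Q_i\subset\TD$, we obtain $\tilde Q_i=Q_i+L\subset\TD+L\subset\TD$. This settles the first assertion. For the polars I would use that $L$ is a subspace, so that $\tilde Q_i^\circ=(Q_i+L)^\circ=Q_i^\circ\cap L^\perp$, a formula I will exploit repeatedly below.

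The central step is \eqref{EqAssQ1}. Here I would invoke the sufficient condition recorded in Remark~\ref{RemPolarCone}: to get $(\nabla F(\xb)^{-1}\tilde Q_i)^\circ=\nabla F(\xb)^T\tilde Q_i^\circ$ it is enough to produce, for each $i$, a direction $u_i$ with $\nabla F(\xb)u_i\in\ri\tilde Q_i$. Using the standard convex-analysis identity $\ri(Q_i+L)=\ri Q_i+L$ for the relative interior of a sum with a subspace, I pick any $q\in\ri Q_i$ (nonempty, since $Q_i$ is a nonempty convex set, containing $0$). By the surjectivity condition \eqref{EqSurjectivity} I may write $q=\nabla F(\xb)u_i+l$ with $l\in L$, so that $\nabla F(\xb)u_i=q-l\in\ri Q_i+L=\ri\tilde Q_i$, which is exactly the required direction. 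This establishes \eqref{EqAssQ1}.

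Next I would check the two remaining hypotheses of Theorem~\ref{ThQstatEquSStat}. For \eqref{EqNecEqual} applied to the $\tilde Q_i$, the polar formula gives $\bigcap_{i=1}^K\tilde Q_i^\circ=\big(\bigcap_{i=1}^K Q_i^\circ\big)\cap L^\perp=\NrD\cap L^\perp$, where I used \eqref{EqNecEqual} for the original $Q_i$; it then suffices to note $\NrD\subset L^\perp$. This follows from \eqref{EqLinealitySpace}: taking the zero tangent vector shows $L\subset\TD$, hence $\pm L\subset\TD$, so every $v^\ast\in\NrD=\TD^\circ$ satisfies $\skalp{v^\ast,l}=0$ for all $l\in L$. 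For \eqref{EqQStatSStat} I would use the ``in particular'' clause of Theorem~\ref{ThQstatEquSStat} with the same subspace $L$: since $0\in Q_i$ and $L=-L$, we have $L\subset Q_i+L=\tilde Q_i$ and $L\subset(-Q_i)+L=-\tilde Q_i$, whence $L\subset\bigcap_{i=1}^K\big(\tilde Q_i\cap(-\tilde Q_i)\big)$, while \eqref{EqSurjectivity} holds by assumption. With \eqref{EqAssQ1}, \eqref{EqNecEqual} and \eqref{EqQStatSStat} all verified for the convex cones $\tilde Q_i\subset\TD$ under the standing GGCQ, Theorem~\ref{ThQstatEquSStat} applies verbatim and yields the asserted equalities for $\widehat N_\Omega(\xb)$.

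I expect the only genuinely delicate point to be \eqref{EqAssQ1}. One must make sure the relative-interior identity $\ri(Q_i+L)=\ri Q_i+L$ is legitimately applicable (it is, being a standard fact for sums with a subspace) and, crucially, that \eqref{EqSurjectivity} is used to land the image $\nabla F(\xb)u_i$ inside $\ri\tilde Q_i$ rather than merely in $\tilde Q_i$; this is what triggers the sufficient condition of Remark~\ref{RemPolarCone}. The remaining steps are routine manipulations of polar cones and subspaces.
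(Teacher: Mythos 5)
Your proof is correct and takes essentially the same route as the paper: both reduce the corollary to Theorem \ref{ThQstatEquSStat} by verifying its hypotheses for the cones $\tilde Q_i=Q_i+L$, obtaining \eqref{EqAssQ1} from Remark \ref{RemPolarCone} by using \eqref{EqSurjectivity} to produce a direction whose image lies in $\ri\tilde Q_i$, and concluding via $L\subset\tilde Q_i\cap(-\tilde Q_i)$. The only cosmetic differences are that the paper establishes \eqref{EqNecEqual} for the $\tilde Q_i$ by the sandwich $\NrD\subset\bigcap_{i=1}^K\tilde Q_i^\circ\subset\bigcap_{i=1}^K Q_i^\circ=\NrD$ and reaches $\ri\tilde Q_i$ by the line-segment principle (\cite[Theorem 6.1]{Ro70}), whereas you use the identities $\tilde Q_i^\circ=Q_i^\circ\cap L^\perp$ and $\ri(Q_i+L)=\ri Q_i+L$.
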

\begin{proof} Firstly observe  that $\tilde Q_i=Q_i+L\subset \TD+L\subset\TD$ by \eqref{EqLinealitySpace}.
  Next consider $z\in\ri \tilde Q_i$. By \eqref{EqSurjectivity} there exists $u\in\R^n$ and $l\in L$ such that $\nabla F(\xb)u+l=z$. Because of $-l\in L\subset\tilde Q_i$ we have $z-2l\in\tilde Q_i$ and thus $\nabla F(\xb)u=z-l=\frac 12 z+ \frac 12(z-2l)\in \ri\tilde Q_i$ by \cite[Theorem 6.1]{Ro70} implying \eqref{EqAssQ1} by taking into account Remark \ref{RemPolarCone}. Further, from  $Q_i\subset \tilde Q_i\subset \TD$ it follows that
\[\NrD=(\TD)^\circ\subset\bigcap_{i=1}^K\tilde Q_i^\circ\subset \bigcap_{i=1}^K Q_i^\circ=\NrD.\]
 Finally note that $L\subset \tilde Q_i\cap(-\tilde Q_i)$, $i=1,\ldots,K$ and the assertion follows from Theorem \ref{ThQstatEquSStat}.
\end{proof}

The following definition is motivated by Theorem \ref{ThBasisQStat}.
\begin{definition}\label{DefQStat}Let $\xb$ be feasible for the program \eqref{EqGenProg} and
let $Q_1,\ldots,Q_K$ be convex cones contained in $\TD$ fulfilling \eqref{EqAssQ}.
\begin{enumerate}
  \item[(i)]We say that $\xb$ is $\Q$-stationary with respect to $Q_1,\ldots,Q_K$, if
  \[-\nabla f(\xb)\in \nabla F(\xb)^T\big(Q_1^\circ \cap \bigcap_{i=2}^K(\ker\nabla F(\xb)^T+Q_i^\circ)\big).\]
  \item[(ii)]We say that $\xb$ is $\Q_M$-stationary with respect to $Q_1,\ldots,Q_K$, if
  \[-\nabla f(\xb)\in \nabla F(\xb)^T\big(N_D(F(\xb))\cap Q_1^\circ \cap \bigcap_{i=2}^K(\ker\nabla F(\xb)^T+Q_i^\circ)\big).\]
\end{enumerate}
\end{definition}
Note that this definition is an extension of the definition of $\Q$- and $\Q_M$-stationarity in \cite{BeGfr16b}, where only the case $K=2$ was considered.

The following corollary is an immediate consequence of the definitions and Theorem \ref{ThBasisQStat}.
\begin{corollary}\label{CorBasicQStat}Assume that GGCQ is fulfilled at the point $\xb$ feasible for \eqref{EqGenProg}. Further assume that we are given convex cones  $Q_1,\ldots,Q_K\subset\TD$ fulfilling \eqref{EqAssQ}. If $\xb$ is B-stationary, then $\xb$ is $\Q$-stationary with respect to $Q_1,\ldots,Q_K$. Conversely, if $\xb$ is $\Q$-stationary with respect to $Q_1,\ldots,Q_K$ and \eqref{EqEqualityQStat} is fulfilled, then $\xb$ is S-stationary and consequently B-stationary.
\end{corollary}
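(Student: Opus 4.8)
The plan is to obtain both implications by a direct chaining of the inclusions already established in Theorem~\ref{ThBasisQStat} with the dual form of B-stationarity~\eqref{EqBstatDual} and the lower inclusion~\eqref{EqLowerIncl}; no new estimate is needed, the work being purely a matter of unwinding the definitions in the right order. For the first implication I would start from the hypothesis that $\xb$ is B-stationary, which by~\eqref{EqBstatDual} is the membership $-\nabla f(\xb)\in\widehat N_\Omega(\xb)$. Since GGCQ holds at $\xb$ and the convex cones $Q_1,\ldots,Q_K\subset\TD$ satisfy~\eqref{EqAssQ}, the hypotheses of Theorem~\ref{ThBasisQStat} are met, so the inclusion~\eqref{EqInclRegNormalConeQ} is at my disposal. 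Passing $-\nabla f(\xb)$ through this inclusion yields
\[-\nabla f(\xb)\in\nabla F(\xb)^T\big(Q_1^\circ\cap\bigcap_{i=2}^K(\ker\nabla F(\xb)^T+Q_i^\circ)\big),\]
which is exactly the defining condition of $\Q$-stationarity with respect to $Q_1,\ldots,Q_K$ in Definition~\ref{DefQStat}(i).

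For the converse I would assume $\Q$-stationarity together with~\eqref{EqEqualityQStat}. By Definition~\ref{DefQStat}(i), $-\nabla f(\xb)$ lies in the set $\nabla F(\xb)^T(Q_1^\circ\cap\bigcap_{i=2}^K(\ker\nabla F(\xb)^T+Q_i^\circ))$, and applying~\eqref{EqEqualityQStat} transports this vector into $\nabla F(\xb)^T\NrD$, which is precisely the definition of S-stationarity. It then remains to deduce B-stationarity from S-stationarity: invoking the lower inclusion~\eqref{EqLowerIncl}, namely $\nabla F(\xb)^T\widehat N_D(F(\xb))\subset\widehat N_\Omega(\xb)$, gives $-\nabla f(\xb)\in\widehat N_\Omega(\xb)$, i.e.\ \eqref{EqBstatDual} holds and $\xb$ is B-stationary.

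I do not expect a genuine obstacle here: the entire mathematical content has been deposited in Theorem~\ref{ThBasisQStat} and in~\eqref{EqLowerIncl}, and the corollary merely repackages it. The only point demanding care is the bookkeeping of hypotheses across the two directions — GGCQ together with~\eqref{EqAssQ} is what validates~\eqref{EqInclRegNormalConeQ} for the forward direction, whereas the converse additionally consumes~\eqref{EqEqualityQStat}; once these are in place the argument reduces to a short chain of set inclusions and definition-unfoldings.
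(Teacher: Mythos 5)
Your proposal is correct and follows exactly the route the paper intends: the paper states this corollary as an immediate consequence of Definition~\ref{DefQStat} and Theorem~\ref{ThBasisQStat}, which is precisely your chain — \eqref{EqBstatDual} pushed through \eqref{EqInclRegNormalConeQ} for the forward direction, and \eqref{EqEqualityQStat} followed by the lower inclusion \eqref{EqLowerIncl} for the converse. No gaps; the hypothesis bookkeeping you flag (GGCQ plus \eqref{EqAssQ} for one direction, \eqref{EqEqualityQStat} additionally for the other) is handled correctly.
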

We know that under the assumptions of Proposition \ref{PropImprInclLimNormalCone} every B-stationary point $\xb$ for the problem \eqref{EqGenProg} is both  M-stationary and $\Q$-stationary with respect to every collection of cones $Q_1,\ldots,Q_K\subset\TD$ fulfilling \eqref{EqAssQ}, i.e.
\begin{eqnarray*}-\nabla f(\xb)&\in&\nabla F(\xb)^T N_D(F(\xb))\cap \nabla F(\xb)^T\big( Q_1^\circ \cap \bigcap_{i=2}^K(\ker\nabla F(\xb)^T+Q_i^\circ)\big)\\
&=&\nabla F(\xb)^T\Big(\big(\ker\nabla F(\xb)^T+N_D(F(\xb))\big)\cap Q_1^\circ \cap \bigcap_{i=2}^K(\ker\nabla F(\xb)^T+Q_i^\circ)\Big).
\end{eqnarray*}
Comparing this relation with the definition of $\Q_M$-stationarity we see that $\Q_M$-stationarity with respect to $Q_1,\ldots,Q_K$ is stronger than the simultaneous fulfillment of M-stationarity and $\Q$-stationarity with respect to $Q_1,\ldots,Q_K$. We refer to \cite[Example 2]{BeGfr16b} for an example which shows that $\Q_M$-stationarity is strictly stronger than M-stationarity. However, to ensure $\Q_M$-stationarity of a B-stationary point $\xb$, some additional assumption has to be fulfilled.
\begin{lemma}
  \label{LemQ_MStat}Let $\xb$ be B-stationary for the program \eqref{EqGenProg} and assume that the assumptions of Proposition \ref{PropImprInclLimNormalCone} are fulfilled at $\xb$. Further assume that for every $\lambda\in N_{\TD}(0)$ there exists a convex cone $Q_\lambda\subset \TD$ containing $\lambda$ and satisfying $(\nabla F(\xb)^{-1}Q_\lambda)^\circ=\nabla F(\xb)^TQ_\lambda^\circ$. Then there exists a convex cone $Q_1\subset \TD$ fulfilling  $(\nabla F(\xb)^{-1}Q_1)^\circ=\nabla F(\xb)^TQ_1^\circ$ such that for every collection $Q_2,\ldots,Q_K\subset\TD$ fulfilling \eqref{EqAssQ} the point $\xb$ is $\Q_M$ stationary with respect to $Q_1,\ldots,Q_K$.
\end{lemma}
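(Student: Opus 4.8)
The plan is to produce a single multiplier $\lambda$ that witnesses every membership required by Definition \ref{DefQStat}(ii), and then to choose $Q_1$ adapted to that $\lambda$. Since $\xb$ is B-stationary, \eqref{EqBstatDual} gives $-\nabla f(\xb)\in\widehat N_\Omega(\xb)$, and under the assumptions of Proposition \ref{PropImprInclLimNormalCone} we have $\widehat N_\Omega(\xb)\subset\nabla F(\xb)^TN_{\TD}(0)$. Hence there is $\lambda\in N_{\TD}(0)$ with $-\nabla f(\xb)=\nabla F(\xb)^T\lambda$. Because $N_{\TD}(0)\subset N_D(F(\xb))$, this $\lambda$ already lies in the factor $N_D(F(\xb))$ of the $\Q_M$-cone. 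Applying the standing hypothesis to this particular $\lambda$, I would set $Q_1:=Q_\lambda\subset\TD$; it fulfils \eqref{EqAssQ} and its polar contains $\lambda$, i.e.\ $\lambda\in Q_1^\circ$, so the factor $Q_1^\circ$ is taken care of as well.

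The crux is to verify, for an arbitrary admissible collection $Q_2,\dots,Q_K\subset\TD$ (i.e.\ satisfying \eqref{EqAssQ}), that the very same $\lambda$ belongs to $\ker\nabla F(\xb)^T+Q_i^\circ$ for each $i\in\{2,\dots,K\}$. Fix such an $i$. From $Q_i\subset\TD$ we get $\nabla F(\xb)^{-1}Q_i\subset\nabla F(\xb)^{-1}\TD=\Tlin(\xb)$, so that GGCQ and \eqref{EqBstatDual} yield
\[-\nabla f(\xb)\in\widehat N_\Omega(\xb)=(\Tlin(\xb))^\circ\subset(\nabla F(\xb)^{-1}Q_i)^\circ=\nabla F(\xb)^TQ_i^\circ,\]
the last equality being \eqref{EqAssQ} for $Q_i$. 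Thus there is $\eta_i\in Q_i^\circ$ with $\nabla F(\xb)^T\eta_i=-\nabla f(\xb)=\nabla F(\xb)^T\lambda$, whence $\lambda-\eta_i\in\ker\nabla F(\xb)^T$ and therefore $\lambda=\eta_i+(\lambda-\eta_i)\in Q_i^\circ+\ker\nabla F(\xb)^T$.

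Putting the three parts together, $\lambda\in N_D(F(\xb))\cap Q_1^\circ\cap\bigcap_{i=2}^K(\ker\nabla F(\xb)^T+Q_i^\circ)$, and since $-\nabla f(\xb)=\nabla F(\xb)^T\lambda$ this is precisely $\Q_M$-stationarity of $\xb$ with respect to $Q_1,\dots,Q_K$. I expect the one genuinely delicate point to be the asymmetry between $i=1$ and $i\ge2$: for $i\ge2$ the kernel of $\nabla F(\xb)^T$ supplies enough freedom to absorb the gap between the common multiplier $\lambda$ and the cone-specific multiplier $\eta_i$, but the factor $N_D(F(\xb))\cap Q_1^\circ$ admits no such slack, which is exactly why the covering hypothesis on the family $\{Q_\lambda\}$ is needed in order to select $Q_1$ with $\lambda\in Q_1^\circ$ outright. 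It remains only to note that this $Q_1$ is itself admissible, since the hypothesis guarantees that it satisfies \eqref{EqAssQ}.
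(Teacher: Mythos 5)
Your proof is correct and follows essentially the same route as the paper's: extract $\lambda\in N_{\TD}(0)$ with $-\nabla f(\xb)=\nabla F(\xb)^T\lambda$ from B-stationarity and \eqref{EqUpperIncl}, take $Q_1:=Q_\lambda$ (reading the hypothesis, as the paper's own proof does, as $\lambda\in Q_\lambda^\circ$), and show that this same $\lambda$ lies in $N_D(F(\xb))\cap Q_1^\circ\cap\bigcap_{i=2}^K\big(\ker\nabla F(\xb)^T+Q_i^\circ\big)$. The only cosmetic difference is that where the paper invokes the intersection identity $(\nabla F(\xb)^TS_1)\cap(\nabla F(\xb)^TS_2)=\nabla F(\xb)^T\big(S_1\cap(\ker\nabla F(\xb)^T+S_2)\big)$ from \cite[Lemma 1]{BeGfr16b} (``similar to the derivation of Theorem \ref{ThBasisQStat}''), you prove the one instance you need by hand via the multipliers $\eta_i$.
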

\begin{proof}From the definition of B-stationarity and \eqref{EqUpperIncl} we  deduce the existence of $\lambda\in N_{\TD}(0)$ fulfilling $-\nabla f(\xb)=\nabla F(\xb)^T\lambda$. By taking $Q_1=Q_\lambda$ we obviously have $\lambda\in N_{\TD}(0)\cap Q_1^\circ\subset N_D(F(\xb))\cap Q_1^\circ$ implying $-\nabla f(\xb)\in\nabla F(\xb)^T(N_D(F(\xb))\cap Q_1^\circ)$. Now consider cones $Q_2,\ldots,Q_K\subset \TD$ fulfilling \eqref{EqAssQ}. Similar to the derivation of Theorem \ref{ThBasisQStat} we obtain
\begin{eqnarray*}-\nabla f(\xb)&\in& \nabla F(\xb)^T(N_D(F(\xb))\cap Q_1^\circ)\cap \bigcap_{i=2}^K\nabla F(\xb)^TQ_i^\circ\\
&=&\nabla F(\xb)^T\big(N_D(F(\xb))\cap Q_1^\circ \cap \bigcap_{i=2}^K(\ker\nabla F(\xb)^T+Q_i^\circ)\big)\end{eqnarray*}
and the lemma is proved.
\end{proof}
\begin{lemma}\label{LemLimNormal}Let $\xb$ be feasible for \eqref{EqGenProg} and assume that $\TD$ is the union of finitely many closed convex cones $C_1,\ldots,C_p$. Then for every $\lambda\in N_{\TD}(0)$ there is some $\bar i\in\{1,\ldots,p\}$ satisfying $\lambda\in C_{\bar i}^\circ$.
\end{lemma}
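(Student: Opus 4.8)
The plan is to unwind the definition of the limiting normal cone and then exploit the finiteness of the covering $\{C_1,\ldots,C_p\}$ via a pigeonhole argument. First I would take $\lambda\in N_{\TD}(0)$ and invoke the definition of the basic normal cone to produce sequences $z_k\to 0$ with $z_k\in\TD$ and $w_k^\ast\to\lambda$ with $w_k^\ast\in\widehat N_{\TD}(z_k)$ for all $k$. Since $\TD=\bigcup_{i=1}^p C_i$ is a union of \emph{finitely many} sets, each $z_k$ lies in at least one $C_i$, so some fixed index $\bar i$ is attained infinitely often; after passing to a subsequence (which leaves the limit $\lambda$ unchanged) I may assume $z_k\in C_{\bar i}$ for all $k$.

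Next I would use the monotonicity of the tangent cone under set inclusion. Because $C_{\bar i}\subset\TD$ and $z_k\in C_{\bar i}$, any direction tangent to $C_{\bar i}$ at $z_k$ is also tangent to $\TD$ at $z_k$, i.e. $T_{C_{\bar i}}(z_k)\subset T_{\TD}(z_k)$. Passing to polars reverses this inclusion and gives $\widehat N_{\TD}(z_k)=T_{\TD}(z_k)^\circ\subset T_{C_{\bar i}}(z_k)^\circ=\widehat N_{C_{\bar i}}(z_k)$. Since $C_{\bar i}$ is convex, its regular normal cone coincides with the normal cone of convex analysis, so in fact $w_k^\ast\in N_{C_{\bar i}}(z_k)$.

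The final step records the elementary fact that for a convex cone $C$ one has $N_C(z)\subset C^\circ$ for every $z\in C$: if $v\in N_C(z)$, then $\skalp{v,c-z}\le 0$ for all $c\in C$, and testing with $c=0$ and $c=2z$ (both lying in $C$ because $C$ is a cone) forces $\skalp{v,z}=0$, whence $\skalp{v,c}\le 0$ for all $c\in C$, that is $v\in C^\circ$. Applying this with $C=C_{\bar i}$ yields $w_k^\ast\in C_{\bar i}^\circ$ for every $k$. As the polar cone $C_{\bar i}^\circ$ is closed and $w_k^\ast\to\lambda$, I conclude $\lambda\in C_{\bar i}^\circ$, which is the assertion.

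I expect the only genuinely nontrivial point to be the passage from the normal cone of the (generally nonconvex) union $\TD$ to the polar of a single convex piece $C_{\bar i}$. The pigeonhole reduction to one fixed piece, combined with the monotonicity of tangent cones and the anti-monotonicity of their polars, is exactly what enables this transition; the cone identity $N_C(z)\subset C^\circ$ and the closedness of $C_{\bar i}^\circ$ are then routine and complete the limit passage.
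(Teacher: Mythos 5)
Your proof is correct and takes essentially the same route as the paper's: unwind the definition of the limiting normal cone, use finiteness of the covering to pass to a subsequence with $z_k\in C_{\bar i}$ for a fixed $\bar i$, convert $\widehat N_{\TD}(z_k)$ into the convex-analysis normal cone $N_{C_{\bar i}}(z_k)\subset C_{\bar i}^\circ$, and finish by closedness of $C_{\bar i}^\circ$. The only cosmetic difference is that the paper invokes the equality $T_{\TD}(t_k)=\bigcup_{i:\,t_k\in C_i}T_{C_i}(t_k)$ and polarizes it, whereas you use only the easier monotonicity inclusion $T_{C_{\bar i}}(z_k)\subset T_{\TD}(z_k)$, which is all that is needed.
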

\begin{proof}
  Consider $\lambda\in N_{\TD}(0)$. By the definition of the limiting normal cone there are sequences $t_k\longsetto{{\TD}}0$ and $\lambda_k\to\lambda$ with
  \[\lambda_k\in \widehat N_{\TD}(t_k)=\big(\bigcup_{i: t_k\in C_i}T_{C_i}(t_k)\big)^\circ=\bigcap _{i: t_k\in C_i}(T_{C_i}(t_k))^\circ=\bigcap _{i: t_k\in C_i}N_{C_i}(t_k).\]
  By passing to a subsequence if necessary we can assume that there is an index $\bar i$ such that $t_k\in C_{\bar i}$ for all $k$ and we obtain $\lambda_k\in N_{C_{\bar i}}(t_k)=\{c^\ast\in C_{\bar i}^\circ\mv \skalp{c^\ast, t_k}=0\}\subset C_{\bar i}^\circ$. Since the polar cone $C_{\bar i}^\circ$ is closed, we deduce $\lambda\in C_{\bar i}^\circ$.
\end{proof}
If $\TD$ is the union of finitely many convex polyhedral cones $C_1,\ldots,C_p$, then the mapping $u\rightrightarrows \nabla F(\xb)u-\TD$ is a polyhedral multifunction and thus
metrically subregular at $(0,0)$ by Robinson's result \cite{Rob81}. Further we know that for any convex polyhedral cone $Q$ we have $(\nabla F(\xb)^{-1}Q)^\circ=\nabla F(\xb)^TQ^\circ$. Hence we obtain the following corollary.
\begin{corollary}
  \label{CorQ_MStat}Assume that $\xb$ is B-stationary for the program \eqref{EqGenProg}, that GGCQ is fulfilled at  $\xb$ and that $\TD$ is the union of finitely many convex polyhedral cones. Then there is a convex polyhedral cone $Q_1\subset \TD$ such that for every collection $Q_2,\ldots,Q_K$ of convex polyhedral cones contained in $\TD$ the point $\xb$ is $\Q_M$-stationary with respect to $Q_1,\ldots,Q_K$.
\end{corollary}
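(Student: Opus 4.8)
The plan is to obtain this corollary as a direct application of Lemma \ref{LemQ_MStat}, whose hypotheses I verify using the polyhedral structure of $\TD$ together with Lemma \ref{LemLimNormal}. Throughout, write $\TD=\bigcup_{i=1}^p C_i$ as the union of finitely many convex polyhedral cones $C_1,\ldots,C_p$.

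First I would check that the assumptions of Proposition \ref{PropImprInclLimNormalCone} hold at $\xb$. GGCQ is assumed, and the mapping $u\rightrightarrows\nabla F(\xb)u-\TD$ is a polyhedral multifunction, since $\nabla F(\xb)$ is linear and $\TD$ is a finite union of convex polyhedral cones; by Robinson's result \cite{Rob81} it is therefore metrically subregular at $(0,0)$, exactly as recorded in the text preceding the corollary. Hence Proposition \ref{PropImprInclLimNormalCone} is applicable at the B-stationary point $\xb$.

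Next I would verify the remaining hypothesis of Lemma \ref{LemQ_MStat}, namely that every $\lambda\in N_{\TD}(0)$ admits an associated convex cone $Q_\lambda\subset\TD$ satisfying $\lambda\in Q_\lambda^\circ$ together with the qualification $(\nabla F(\xb)^{-1}Q_\lambda)^\circ=\nabla F(\xb)^TQ_\lambda^\circ$. Given such a $\lambda$, Lemma \ref{LemLimNormal} produces an index $\bar i$ with $\lambda\in C_{\bar i}^\circ$, and I would simply set $Q_\lambda:=C_{\bar i}$. This is a convex polyhedral cone contained in $\TD$; it satisfies $\lambda\in Q_\lambda^\circ$ by the choice of $\bar i$; and because $Q_\lambda$ is polyhedral the identity $(\nabla F(\xb)^{-1}Q_\lambda)^\circ=\nabla F(\xb)^TQ_\lambda^\circ$ in \eqref{EqAssQ} holds automatically, again as noted just before the corollary. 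Thus all hypotheses of Lemma \ref{LemQ_MStat} are met.

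Finally, Lemma \ref{LemQ_MStat} yields a convex cone $Q_1\subset\TD$ fulfilling \eqref{EqAssQ} such that $\xb$ is $\Q_M$-stationary with respect to $Q_1,\ldots,Q_K$ for every collection $Q_2,\ldots,Q_K\subset\TD$ fulfilling \eqref{EqAssQ}; by the construction in the proof of that lemma $Q_1$ may be taken to be one of the cones $C_{\bar i}$ and is therefore convex polyhedral. Since any convex polyhedral cones $Q_2,\ldots,Q_K\subset\TD$ automatically satisfy \eqref{EqAssQ}, the asserted conclusion follows with no restriction on the collection. I do not expect a genuine obstacle here: the single point requiring care is the recognition that the index $\bar i$ delivered by Lemma \ref{LemLimNormal} yields the desired cone through $Q_\lambda=C_{\bar i}$ via the polarity $\lambda\in C_{\bar i}^\circ$, which is precisely the form in which Lemma \ref{LemLimNormal} was prepared.
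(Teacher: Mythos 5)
Your proposal is correct and follows essentially the same route the paper intends: you combine Lemma \ref{LemQ_MStat} with Lemma \ref{LemLimNormal}, Robinson's result \cite{Rob81} for metric subregularity of the polyhedral mapping $u\rightrightarrows\nabla F(\xb)u-\TD$, and the automatic validity of \eqref{EqAssQ} for convex polyhedral cones, exactly as in the remarks preceding the corollary. Your final observation that the cone $Q_1$ produced by Lemma \ref{LemQ_MStat} can be taken to be one of the cones $C_{\bar i}$, and is therefore itself convex polyhedral, correctly supplies the polyhedrality assertion of the corollary.
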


\section{Application to MPDC}
It is clear that $\Q$-stationarity is not a very strong optimality condition for every choice of  $Q_1,\ldots,Q_K\subset \TD$. As mentioned above the fulfillment of \eqref{EqNecEqual} is desirable. For the general problem \eqref{EqGenProg} it can be impossible to choose the cones $Q_1,\ldots,Q_K$ such that \eqref{EqNecEqual} holds. If $\TD$ is the union of finitely many convex cones $C_1,\ldots,C_p$ then we obviously have
\[\NrD=\bigcap_{i=1}^p C_i^\circ.\]
However, to consider $\Q$-stationarity with respect to $C_1,\ldots,C_p$ is in general not a feasible approach because $p$ is often very large. We will now work out that the concepts of $\Q$- and $\Q_M$-stationarity are tailored for the  MPDC \eqref{EqMPDC}. In what follows let $D$ and $F$ be given by \eqref{EqDef_F_D}.

Given a point $y=(y_1,\ldots,y_{m_D})\in D$, we denote by
\[\A_i(y):=\{j\in\{1,\ldots,K_i\}\mv y_i\in D_i^j\},\ i=1,\ldots,m_D\]
the indices of sets $D^j_i$ which contain $y_i$. Further we choose for each $i=1,\ldots,m_D$ some index set $\J_i(y)\subset\A_i(y)$ such that
\begin{equation}\label{EqActJ}
T_{D_i}(y_i)=\bigcup_{j\in \J_i(y)}T_{D_i^j}(y_i).
\end{equation}
Obviously the choice $\J_i(y)=\A_i(y)$ is feasible but for practical reasons it is better to choose $\J_i(y)$ smaller if possible. E.g., if $T_{D_i^{j_2}}(y_i)\subset T_{D_i^{j_1}}(y_i)$ holds for some indices $j_1,j_2\in \A_i(y)$, then we will not include $j_2$ in $\J_i(y)$. Such a situation can occur e.g. in case of MPVC when $(-H_i(\xb),G_i(\xb))=(0,a)$ with $a<0$.

Now consider
\[\nu\in \J(y):=\prod_{i=1}^{m_D}\J_i(y).\]
Since for every $i=1,\ldots,m_D$ the set $D_i$ is the union of finitely many convex polyhedral sets, for every tangent direction $t\in T_{D_i}(y_i)$ we have $y_i+\alpha t\in D_i$ for all $\alpha>0$ sufficiently small. Hence we can apply \cite[Proposition 1]{GfrYe16a} to obtain
\[T_{D(\nu)}(y)=\prod_{i=1}^{m_D}T_{D_i^{\nu_i}}(y_i),\ \nu\in\J(y)\]
with $D(\nu)$ given by \eqref{EqD_nu}, and
\begin{equation}\label{EqTanCone}T_D(y)=\prod_{i=1}^{m_D}T_{D_i}(y_i)=\prod_{i=1}^{m_D}\Big(\bigcup_{j\in \J_i(y)}T_{D_i^j}(y_i)\Big)=\bigcup_{\nu\in \J(y)}T_{D(\nu)}(y).
\end{equation}

We will apply this setting in particular to points $y=F(\xb)$  with $\xb$ feasible for MPDC.
\begin{lemma}
  Let $\xb$ be feasible for the MPDC \eqref{EqMPDC} and assume that we are given $K$ elements $\nu^1,\ldots,\nu^K\in\J(F(\xb))$ such that
  \begin{equation}\label{EqDefK}\{\nu^1_i,\ldots,\nu^K_i\}=\J_i(F(\xb)),\ i=1,\ldots,m_D.\end{equation}
  Then for each $l=1,\ldots,K$ the cone $Q_l:=T_{D(\nu^l)}(F(\xb))$ is a convex polyhedral cone contained in $T_D(F(\xb))$, $\big(\nabla F(\xb)^{-1}Q_l\big)^\circ=\nabla F(\xb)^TQ_l^\circ$,  and
  \[\bigcap_{l=1}^KQ_l^\circ=\widehat N_D(F(\xb)).\]
\end{lemma}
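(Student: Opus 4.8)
The plan is to prove the three claimed properties of the cones $Q_l:=T_{D(\nu^l)}(F(\xb))$ in turn, exploiting the product structure \eqref{EqTanCone} together with the polyhedrality of each $D_i^j$.

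First I would establish that each $Q_l$ is a convex polyhedral cone contained in $T_D(F(\xb))$. Since $D_i^{\nu^l_i}$ is convex polyhedral, its tangent cone $T_{D_i^{\nu^l_i}}(F_i(\xb))$ is a convex polyhedral cone, and by the product formula $Q_l=\prod_{i=1}^{m_D}T_{D_i^{\nu^l_i}}(F_i(\xb))$ is a finite product of such cones, hence itself convex polyhedral. The inclusion $Q_l\subset T_D(F(\xb))$ is immediate from \eqref{EqTanCone}, since $\nu^l\in\J(F(\xb))$ means $Q_l=T_{D(\nu^l)}(F(\xb))$ is one of the pieces in the union defining $T_D(F(\xb))$.

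The identity $\big(\nabla F(\xb)^{-1}Q_l\big)^\circ=\nabla F(\xb)^TQ_l^\circ$ is the second claim, and here I would simply invoke Remark \ref{RemPolarCone}: condition \eqref{EqAssQ} holds automatically for convex polyhedral cones, and $Q_l$ has just been shown to be one. This step is essentially a citation and requires no real work.

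The main content, and the hard part, is the equality $\bigcap_{l=1}^K Q_l^\circ=\widehat N_D(F(\xb))$. The plan is to compute both sides using the product structure and \eqref{EqDefK}. Polarity turns the product into a sum, so $Q_l^\circ=\prod_{i=1}^{m_D}\big(T_{D_i^{\nu^l_i}}(F_i(\xb))\big)^\circ$, whence intersecting over $l$ gives $\bigcap_{l=1}^K Q_l^\circ=\prod_{i=1}^{m_D}\bigcap_{l=1}^K\big(T_{D_i^{\nu^l_i}}(F_i(\xb))\big)^\circ$. By \eqref{EqDefK} the indices $\{\nu^1_i,\ldots,\nu^K_i\}$ run over all of $\J_i(F(\xb))$ as $l$ varies, so the inner intersection equals $\bigcap_{j\in\J_i(F(\xb))}\big(T_{D_i^j}(F_i(\xb))\big)^\circ$. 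On the other hand, from \eqref{EqActJ} we have $T_{D_i}(F_i(\xb))=\bigcup_{j\in\J_i(F(\xb))}T_{D_i^j}(F_i(\xb))$, and polarizing a union yields the intersection of the polars, so this inner intersection is exactly $\big(T_{D_i}(F_i(\xb))\big)^\circ=\widehat N_{D_i}(F_i(\xb))$. Reassembling the product over $i$ and using that the regular normal cone of a product is the product of the regular normal cones, $\widehat N_D(F(\xb))=\prod_{i=1}^{m_D}\widehat N_{D_i}(F_i(\xb))$, gives the desired equality. The only subtlety to watch is the bookkeeping that a possible repetition of indices among the $\nu^l_i$ (forced when the $K_i$ differ across $i$) does not affect the intersection, which is harmless since intersecting with a repeated set changes nothing.
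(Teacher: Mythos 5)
Your proposal is correct and follows essentially the same route as the paper: polyhedrality of each $Q_l$ via the product formula, the polar identity via Remark \ref{RemPolarCone}, and the key equality by the same chain — polars turning products into products and unions into intersections, swapping the intersection over $l$ with the product over $i$, and invoking \eqref{EqDefK} and \eqref{EqActJ} — merely traversed in the opposite direction (from $\bigcap_l Q_l^\circ$ toward $\widehat N_D(F(\xb))$ instead of the reverse). Your closing remark on repeated indices being harmless is a point the paper leaves implicit.
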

\begin{proof}
  Obviously for every $l=1,\ldots,K$ the cone $Q_l$ is convex and polyhedral because it is the product of convex polyhedral cones. This implies $\big(\nabla F(\xb)^{-1}Q_l\big)^\circ=\nabla F(\xb)^TQ_l^\circ$ and $Q_l\subset T_D(F(\xb))$ follows from \eqref{EqTanCone}. By taking into account \eqref{EqTanCone} the last assertion follows from
  \begin{eqnarray*}\NrD&=&\Big(\TD\Big)^\circ=\prod_{i=1}^{m_D}\Big(\bigcup_{j\in \J_i(F(\xb))}T_{D_i^j}(F_i(\xb))\Big)^\circ=\prod_{i=1}^{m_D}\Big(\bigcup_{l=1}^K T_{D_i^{\nu^l_i}}(F_i(\xb))\Big)^\circ\\
  &=&\prod_{i=1}^{m_D}\Big(\bigcap_{l=1}^K \big(T_{D_i^{\nu^l_i}}(F_i(\xb))\big)^\circ\Big)=\bigcap_{l=1}^K \Big(\prod_{i=1}^{m_D}\big(T_{D_i^{\nu^l_i}}(F_i(\xb))\big)^\circ\Big)=
  \bigcap_{l=1}^K \Big(\prod_{i=1}^{m_D}T_{D_i^{\nu^l_i}}(F_i(\xb))\Big)^\circ\\
  &=&\bigcap_{l=1}^K Q_l^\circ.
  \end{eqnarray*}
\end{proof}
\begin{definition}\label{DefQStatMPDC}Let $\xb$ be feasible for the  MPDC \eqref{EqMPDC} and let index sets $\J_i(F(\xb))\subset \A_i(\xb)$, $i=1,\ldots,m_D$ fulfilling \eqref{EqActJ} be given. Further we denote by $\Q(\xb)$ the collection of all elements $(\nu^1,\ldots,\nu^K)$ with $\nu^l\in \J(F(\xb))=\prod_{i=1}^{m_D}\J_i(F(\xb))$, $l=1,\ldots,K$ such that \eqref{EqDefK} holds.
\begin{enumerate}
  \item We say that $\xb$ is $\Q$-stationary ($\Q_M$-stationary) for \eqref{EqMPDC} with respect to $(\nu^1,\ldots,\nu^K)\in \Q(\xb)$, if $\xb$ is $\Q$-stationary ($\Q_M$-stationary) with respect to $Q_1,\ldots, Q_K$ in the sense of Definition \ref{DefQStat} with $Q_l:=T_{D(\nu^l)}(F(\xb))$, $l=1,\ldots,K$.
  \item  We say that $\xb$ is $\Q$-stationary ($\Q_M$-stationary) for \eqref{EqMPDC} if $\xb$ is $\Q$-stationary ($\Q_M$-stationary) for \eqref{EqMPDC} with respect to some $(\nu^1,\ldots,\nu^K)\in \Q(\xb)$.
\end{enumerate}
\end{definition}
Definition \ref{DefQStatMPDC} is an extension of the definition of $\Q$- and $\Q_M$-stationarity made for MPCC and MPVC in \cite{BeGfr16b}. Note that the number $K$ appearing in the definition of $\Q(\xb)$ is not fixed. Denoting $K_{\min}(\xb)$ the minimal number $K$ such that $(\nu^1,\ldots,\nu^K)\in \Q(\xb)$, we obviously have
\[K_{\min}(\xb)=\max_{i=1,\ldots,m_D}\vert \J_i(F(\xb))\vert \leq \max_{i=1,\ldots,m_D}K_i.\]
We see from \eqref{EqTanCone} that the tangent cone $\TD$ is the union of the $\vert \J(F(\xb))\vert =\prod_{i=1}^{m_D}\vert \J_i(F(\xb))\vert$ convex polyhedral cones $T_{D(\nu)}(y)$. Hence the minimal number $K_{\min}(\xb)$ is much smaller than the number of components of the tangent cone, except when all or nearly all sets $\J_i(F(\xb))$ have cardinality $1$.
Further it is clear that for every $\nu^1\in\J(F(\xb))$ and every $K\geq K_{\min}(\xb)$ we can find $\nu^2,\ldots,\nu^K\in\J(F(\xb))$ such that $(\nu^1,\ldots,\nu^K)\in\Q(\xb)$.

We allow $K$ to be greater than $K_{\min}(\xb)$ for numerical reasons primarily. Recall that for testing $\Q$-stationarity with respect to $(\nu^1,\ldots,\nu^K)$, we have to
check for all $l=1,\ldots,K$  whether $-\nabla f(\xb)\in \nabla F(\xb)^TQ_l^\circ$, or equivalently, that $u=0$ is a solution of the linear optimization program
\[\min \skalp{\nabla f(\xb),u}\quad\mbox{subject to}\quad \nabla F(\xb)u\in Q_l\]
with $Q_l=T_{D(\nu^l)}(F(\xb))$.
Theoretically the treatment of degenerated linear constraints is not a big problem but the numerical practice tells us the contrary. In \cite{BeGfr16c} we have implemented an algorithm for solving MPVC based on $\Q$-stationarity and the degeneracy of the linear constraints was the reason when the algorithm crashed. The possibility  of choosing $K> K_{\min}(\xb)$ gives us more flexibility to avoid linear programs with degenerated constraints.

The following theorem follows from Corollaries \ref{CorBasicQStat}, \ref{CorQ_MStat}, Theorem \ref{ThQstatEquSStat} and the considerations above.
\begin{theorem}\label{ThDC_QStat}Let $\xb$ be feasible for the  MPDC \eqref{EqMPDC} and assume that GGCQ is fulfilled at $\xb$.
 \begin{enumerate}
 \item[(i)]
 If $\xb$ is B-stationary then $\xb$ is $\Q$-stationary with respect to every element $(\nu^1,\ldots,\nu^K)\in \Q(\xb)$ and there exists some $\bar \nu^1\in \J(F(\xb))$ such that $\xb$ is $\Q_M$-stationary with respect to every $(\bar \nu^1,\nu^2,\ldots,\nu^K)\in\Q(\xb)$.
 \item[(ii)]Conversely, if $\xb$ is $\Q$-stationary with respect to some $(\nu^1,\ldots,\nu^K)\in\Q(\xb)$ and
 \begin{equation}\label{EqEqualityQStatMPDC}\nabla F(\xb)^T\Big(Q_1^\circ \cap \bigcap_{l=2}^K(\ker\nabla F(\xb)^T+Q_l^\circ)\Big)\subset \nabla F(\xb)^T\NrD,
\end{equation}
 where  $Q_l:=T_{D(\nu^l)}F(\xb)$, $l=1,\ldots,K$,
then $\xb$ is S-stationary and consequently B-stationary. In particular, \eqref{EqEqualityQStatMPDC} is fulfilled if
\begin{equation}
  \label{EqSurjectivityMPDC}\ker \nabla F(\xb)^T\cap\Big(Q_1^\circ -Q_l^\circ \Big)=\{0\},\ l=2,\ldots,K.
\end{equation}
 \end{enumerate}
\end{theorem}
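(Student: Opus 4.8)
The plan is to translate the disjunctive data into the abstract language of Section~3 and then quote the corresponding results. The bridge is the lemma preceding Definition~\ref{DefQStatMPDC}: for any $(\nu^1,\ldots,\nu^K)\in\Q(\xb)$ the cones $Q_l:=T_{D(\nu^l)}(F(\xb))$, $l=1,\ldots,K$, are convex polyhedral cones contained in $\TD$ that fulfill \eqref{EqAssQ} and $\bigcap_{l=1}^K Q_l^\circ=\NrD$, i.e.\ \eqref{EqNecEqual}. Thus every statement about $\Q$- and $\Q_M$-stationarity with respect to such families applies directly. I would also record at the outset that, because $\TD$ is by \eqref{EqTanCone} the union of the finitely many polyhedral cones $T_{D(\nu)}(F(\xb))$, $\nu\in\J(F(\xb))$, the linearized multifunction $u\rightrightarrows\nabla F(\xb)u-\TD$ is polyhedral and hence metrically subregular at $(0,0)$ by Robinson's result; together with the standing GGCQ this means the hypotheses of Proposition~\ref{PropImprInclLimNormalCone}, and therefore of Corollary~\ref{CorQ_MStat}, are automatically in force.

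For the first assertion of (i) I would fix an arbitrary $(\nu^1,\ldots,\nu^K)\in\Q(\xb)$, observe that the associated cones satisfy \eqref{EqAssQ}, and invoke the forward implication of Corollary~\ref{CorBasicQStat}: B-stationarity together with GGCQ forces $\Q$-stationarity with respect to $Q_1,\ldots,Q_K$, which is exactly $\Q$-stationarity for the MPDC with respect to $(\nu^1,\ldots,\nu^K)$ in the sense of Definition~\ref{DefQStatMPDC}. For the $\Q_M$-part I would apply Corollary~\ref{CorQ_MStat}, which produces a single polyhedral cone $Q_1\subset\TD$ working uniformly over all polyhedral completions. The point needing care is that this $Q_1$ must be realizable as $T_{D(\bar\nu^1)}(F(\xb))$ for some $\bar\nu^1\in\J(F(\xb))$. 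To see this I would reopen the proof of Corollary~\ref{CorQ_MStat}: B-stationarity and \eqref{EqUpperIncl} give a multiplier $\lambda\in N_{\TD}(0)$ with $-\nabla f(\xb)=\nabla F(\xb)^T\lambda$; by Lemma~\ref{LemLimNormal} applied to the decomposition \eqref{EqTanCone} there is $\bar\nu^1\in\J(F(\xb))$ with $\lambda\in T_{D(\bar\nu^1)}(F(\xb))^\circ$; and then Lemma~\ref{LemQ_MStat} with the choice $Q_1=T_{D(\bar\nu^1)}(F(\xb))$ yields $\Q_M$-stationarity for every family $(\bar\nu^1,\nu^2,\ldots,\nu^K)\in\Q(\xb)$.

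For part (ii) I would again pass through Corollary~\ref{CorBasicQStat}: assuming $\Q$-stationarity with respect to $(\nu^1,\ldots,\nu^K)$ and observing that \eqref{EqEqualityQStatMPDC} is precisely condition \eqref{EqEqualityQStat} for the cones $Q_l=T_{D(\nu^l)}(F(\xb))$, its converse implication delivers S-stationarity and hence B-stationarity. For the final ``in particular'' I would note that \eqref{EqSurjectivityMPDC} is literally condition \eqref{EqQStatSStat} for these cones; since \eqref{EqAssQ} and \eqref{EqNecEqual} already hold, Theorem~\ref{ThQstatEquSStat} applies and gives $\widehat N_\Omega(\xb)=\nabla F(\xb)^T\NrD=\nabla F(\xb)^T\big(Q_1^\circ\cap\bigcap_{l=2}^K(\ker\nabla F(\xb)^T+Q_l^\circ)\big)$, from which \eqref{EqEqualityQStatMPDC} follows trivially as an equality rather than a mere inclusion.

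The main obstacle is the bookkeeping in part (i): Corollary~\ref{CorQ_MStat} only guarantees an abstract cone $Q_1$, whereas Definition~\ref{DefQStatMPDC} forces the first cone to come from an index $\bar\nu^1\in\J(F(\xb))$. Resolving this hinges on Lemma~\ref{LemLimNormal}, which selects an active branch $\bar\nu^1$ whose polar tangent cone already contains the multiplier $\lambda$; everything else is a direct quotation of the Section~3 results once the dictionary $Q_l\leftrightarrow T_{D(\nu^l)}(F(\xb))$ provided by the preceding lemma is in place.
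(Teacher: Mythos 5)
Your proposal is correct and follows essentially the same route as the paper, whose proof is precisely the cited combination of Corollary~\ref{CorBasicQStat} (both directions), Corollary~\ref{CorQ_MStat}, Theorem~\ref{ThQstatEquSStat}, and the bridge lemma identifying $Q_l=T_{D(\nu^l)}(F(\xb))$ as polyhedral cones in $\TD$ satisfying \eqref{EqAssQ} and \eqref{EqNecEqual}. Your extra step of reopening Corollary~\ref{CorQ_MStat} to realize $Q_1$ as $T_{D(\bar\nu^1)}(F(\xb))$ via Lemma~\ref{LemLimNormal} applied to the decomposition \eqref{EqTanCone} is exactly the detail the paper leaves implicit in the phrase ``and the considerations above,'' and it is resolved correctly.
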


\section{On quadratic programs with disjunctive constraints}
In this section we consider the special case of quadratic programs with disjunctive constraints (QPDC)
\begin{eqnarray}
  \label{EqQPDC}\min_{x\in\R^n} &&q(x):=\frac 12 x^TBx +d^Tx\\
  \nonumber\mbox{subject to}&& A_i x\in D_i:=\bigcup_{j=1}^{K_i}D_i^j,\ i=1,\ldots,m_D,
\end{eqnarray}
where  $B$ is a positive semidefinite $n\times n$ matrix, $d\in\R^n$, $A_i$, $i=1,\ldots, m_D$ are $l_i\times n$ matrices and $D_i^j\subset \R^{l_i}$, $i=1,\ldots,m_D$, $j=1,\ldots, K_j$ are convex polyhedral sets, i.e., QPDC is a special case of MPDC with $f(x)=q(x)$ and $F_i(x)=A_ix$, $i=1,\ldots,m_D$. In what follows we denote by $A$ the $m\times n$ matrix
\[A=\left(\begin{array}{l}A_1\\\vdots\\A_{m_D}\end{array}\right),\]
where $m:=\sum_{i=1}^{m_D}l_i$.

We start our analysis with the following preparatory lemma.
\begin{lemma}\label{LemQPSolve}
  Assume that  the convex quadratic program
  \begin{equation}\label{EqQP} \min_{x\in\R^n} \frac 12 x^TBx +d^Tx\ \mbox{subject to}\ Ax\in C \end{equation}
  is feasible,
    where $B$ is some symmetric positive semidefinite $n\times n$ matrix, $d\in\R^n$, $A$ is a $m\times n$ matrix and $C\subset \R^m$ is a convex polyhedral set. Then either there exists a direction $w$ satisfying
    \begin{equation}\label{EqQPDescDir}Bw=0,\ Aw\in 0^+C,\ d^Tw<0,\end{equation}
    or the program \eqref{EqQP} has a global solution $\xb$.
\end{lemma}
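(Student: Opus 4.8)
The plan is to identify the feasible region as a polyhedron and to reduce the claimed dichotomy to the Frank--Wolfe theorem on quadratic minimization. Write $P:=\{x\in\R^n\mv Ax\in C\}$ for the feasible set of \eqref{EqQP}, which is nonempty by assumption. Since $C$ is convex polyhedral, so is $P$, and a short computation with a polyhedral description $C=\{y\mv \skalp{a_k,y}\le b_k\}$ shows that its recession cone is $0^+P=\{w\mv Aw\in 0^+C\}$. The two alternatives are mutually exclusive: if some $w$ satisfies \eqref{EqQPDescDir}, then for any feasible $x$ the ray $x+tw$ stays in $P$ and, because $Bw=0$ and $d^Tw<0$, we get $q(x+tw)=q(x)+t\,d^Tw\to-\infty$, so no global solution can exist. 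Hence it suffices to show that \emph{if no $w$ satisfies \eqref{EqQPDescDir}, then $q$ attains a global minimum on $P$.}

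First I would establish that $q$ is bounded below on $P$ under this hypothesis. For $w\in 0^+P$ and $x\in P$ expand
\[q(x+tw)=q(x)+t(Bx+d)^Tw+\tfrac12 t^2\,w^TBw.\]
Since $B$ is positive semidefinite, $w^TBw\ge0$; moreover $w^TBw=0$ forces $Bw=0$ (as $w^TBw=\norm{B^{1/2}w}^2$). If $w^TBw>0$ the objective is coercive along the ray, while if $w^TBw=0$ then $Bw=0$ and the assumed nonexistence of a solution to \eqref{EqQPDescDir} gives $d^Tw\ge0$; in either case $q$ does not decrease to $-\infty$ along any recession direction of $P$.

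To pass from the absence of decreasing recession rays to a genuine global lower bound and the attainment of the infimum, I would invoke the Frank--Wolfe theorem: a quadratic function that is bounded below on a nonempty polyhedron attains its infimum there. Equivalently, for a convex quadratic $q$ over a polyhedron $P$, boundedness below, attainment of the minimum, and the condition ``$d^Tw\ge0$ for every $w\in 0^+P$ with $Bw=0$'' are all equivalent, and the negation of the last condition is precisely the solvability of \eqref{EqQPDescDir}. Thus, when \eqref{EqQPDescDir} has no solution, a global minimizer $\xb$ of \eqref{EqQP} exists, completing the dichotomy.

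The main obstacle is exactly the borderline case $w^TBw=0$, $d^Tw=0$: along such a recession direction $q$ is constant, so the problem is not coercive and a minimizing sequence may run off to infinity. One therefore cannot conclude attainment by a naive compactness argument, and it is essential that $P$ be polyhedral rather than a general closed convex set; this is the content the Frank--Wolfe theorem supplies, for instance by reducing to the finitely many minimal faces of $P$ or exploiting the finite generation of $0^+P$.
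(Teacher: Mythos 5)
Your proposal is correct, but it takes a genuinely different route from the paper's. You reduce the lemma to classical quadratic-programming theory: after observing that the two alternatives exclude each other and that \eqref{EqQPDescDir} says precisely that the polyhedron $P:=A^{-1}C$ has a recession direction $w$ with $Bw=0$ and $d^Tw<0$, you invoke the known equivalence, for a convex quadratic over a nonempty polyhedron, of boundedness below, attainment of the infimum, and the nonexistence of such a direction. The paper instead proves this equivalence from scratch: the recession condition means that $0$ globally solves the linear program $\min d^Tw$ over the polyhedral cone $S=\{w\mv Bw=0,\ Aw\in 0^+C\}$, whence polyhedral polarity yields multipliers $\mu_B\in\R^n$, $\mu_C\in(0^+C)^\circ$ with $-d=B^T\mu_B+A^T\mu_C$; completing the square and decomposing $C=\Sigma+0^+C$ (bounded part plus recession cone) gives an explicit lower bound, and attainment follows because the image $\{(B^{1/2}u,d^Tu)\mv u\in A^{-1}C\}$ is polyhedral, hence closed, so the limit data of a minimizing sequence are realized by a feasible point. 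One point of precision in your write-up: the Frank--Wolfe theorem in its usual form (a quadratic bounded below on a nonempty polyhedron attains its infimum) does not by itself close the gap, because your ray analysis establishes only that $q$ does not tend to $-\infty$ along any single recession ray, which, as you yourself note, is strictly weaker than a global lower bound — for non-polyhedral convex feasible sets this implication genuinely fails. What you need, and in fact do invoke, is the stronger equivalence (Eaves' theorem, or the convex-QP refinement of Frank--Wolfe), so your argument is sound provided the citation points to that result rather than to Frank--Wolfe alone. The trade-off is clear: your route is short and leans on a classical theorem, whereas the paper's proof is self-contained and, as a by-product, exhibits the explicit multiplier structure behind the lower bound.
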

\begin{proof}
  Assume that for every $w$ with $Bw=0$, $Aw\in 0^+C$ we have $d^Tw\geq 0$, i.e. $0$ is a global solution  of the program
  \[\min d^Tw\quad \mbox{subject to}\quad w\in S:=\left\{w\mv \left(\begin{array}{c}B\\A\end{array}\right)w\in \{0\}^n\times 0^+C\right\}.\]
  Since $C$ is a convex polyhedral set, its recession cone $0^+C$ is a convex polyhedral cone and so is $\{0\}^n\times 0^+C$ as well. Hence
  \[\widehat N_S(0)=S^\circ =(B^T\vdots A^T)(\{0\}^n\times 0^+C)^\circ=B^T\R^n+A^T(0^+C)^\circ\]
  and  from the first-order optimality condition $-d\in \widehat N_S(0)$ we derive the existence of multipliers $\mu_B\in\R^n$ and $\mu_C\in (0^+C)^\circ$ such that
  \[-d =B^T\mu_B+A^T\mu_C.\]
  The convex polyhedral set $C$ is the sum  of the convex hull $\Sigma$ of its extreme points and its recession cone. Hence for every $x$ feasible for \eqref{EqQP} there is some $c_1\in\Sigma$ and some $c_2\in 0^+C$ such that $Ax=c_1+c_2$ and, by taking into account $\mu_C^Tc_2\leq 0$, we obtain
  \begin{eqnarray}\nonumber\frac 12 x^TBx+d^Tx&=& \frac 12 x^TBx -\mu_B^TBx -\mu_C^TAx\\ &=& \frac 12 (x-\mu_B)^TB(x-\mu_B)-\frac 12\mu_B^TB\mu_B -\mu_C^Tc_1- \mu_C^Tc_2\label{EqLB4QP}\\
  \nonumber&\geq&  -\frac 12\mu_B^TB\mu_B -\mu_C^Tc_1.\end{eqnarray}
  The set $\Sigma$ is compact and  we conclude that the objective of \eqref{EqQP} is bounded below on the feasible domain $A^{-1}C$ by $-\frac 12\mu_B^TB\mu_B -\max_{c_1\in\Sigma}\mu_C^Tc_1$. Thus
  \[\alpha:=\inf\{\frac 12 x^TBx +d^Tx\mv Ax\in C\}\]
   is finite and there remains to show that the infimum is attained. Consider some sequence $x_k\in A^{-1}C$ with $\lim_{k\to\infty} \frac 12 x_k^TBx_k +d^Tx_k=\alpha$. We conclude from \eqref{EqLB4QP} that $(x_k-\mu_B)^TB(x_k-\mu_B)$ is bounded which in turn implies that the sequence $B^{1/2}x_k$ is bounded. Hence  the sequence $x_k^TBx_k=\norm{B^{1/2}x_k}^2$ is bounded as well and we can conclude also the boundedness of $d^Tx_k$. By passing to a subsequence we can assume that the sequence $(B^{1/2}x_k, d^Tx_k)$ converges to some $(z,\beta)$ and it follows that $\alpha=\frac 12\norm{z}^2+\beta$. Since $C$ is a convex polyhedral set, it follows by applying  \cite[Theorem 19.3]{Ro70} twice, that the sets $A^{-1}C$ and $\{(B^{1/2}u,d^Tu)\mv u\in A^{-1}C\}$ are convex and polyhedral. Since convex polyhedral sets are closed, it follows that $(z,\beta)\in\{(B^{1/2}u,d^Tu)\mv u\in A^{-1}C\}$. Thus there is some $\bar x\in A^{-1}C$ with $(z,\beta)=(B^{1/2}\bar x,d^T\bar x)$ and $\frac 12 \bar x^TB\bar x+d^T\bar x=\alpha$ follows. This shows that $\bar x$ is a global minimizer for \eqref{EqQP}.
\end{proof}
In what follows we assume that we have at hand an algorithm for solving \eqref{EqQP}, which either computes a global solution $\xb$ or a descent direction $w$ fulfilling \eqref{EqQPDescDir}. Such an algorithm is e.g. the active set method as described in \cite{Fle81}, where we have to rewrite the constraints equivalently in the form $\skalp{A^Ta_i,x}\leq b_i$, $i=1,\ldots,p$ using the representation  of $C$ as the intersection of finitely many half-spaces, $C=\{c\mv \skalp{a_i,c}\leq b_i,\ i=1,\ldots,p\}$.

Consider now the following algorithm.
\begin{algorithm}[Basic algorithm for QPDC]\label{AlgQPBasic}\rm\mbox{ }\setlength{\mylength}{\hsize}\addtolength{\mylength}{-2\parindent}\\
\indent{\bf Input:} starting point $x^1$ feasible for the QPDC \eqref{EqQPDC}.\\
\indent 1.) Set the iteration counter $k:=1$.\\
\indent 2.) \begin{minipage}[t]{\mylength}{ Select $(\nu^{k,1},\ldots,\nu^{k,K})\in\Q(x^k)$ and consider for $l=1,\ldots,K$ the quadratic programs
\[(QP^{k,l})\qquad\min q(x)\quad\mbox{subject to}\quad Ax \in D(\nu^{k,l}).\]
If one of these programs is unbounded below, stop the algorithm and return the current iterate $x^k$ together with $\bar \nu:=\nu^{k,l}$ and the descent direction $w$ fulfilling \eqref{EqQPDescDir}. Otherwise let $x^{k,l}$, $l=1,\ldots,K$ denote the global solutions of $(QP^{k,l})$.}\end{minipage}\\
\indent 3.) \begin{minipage}[t]{\mylength}If $q(x^k)=q(x^{k,l})$, $l=1,\ldots,K$, stop the algorithm and return $x^k$ together with $\bar\nu:=\nu^{k,1}$.\end{minipage}\\
\indent 4.) \begin{minipage}[t]{\mylength}Choose $l\in\{1,\ldots,K\}$ with $q(x^{k,l})<q(x^k)$, set $x^{k+1}=x^{k,l}$, increase the iteration counter $k:=k+1$ and go to step 2.)\end{minipage}
\end{algorithm}

Note that the iterate $x^k$ is feasible for every quadratic subproblem $(QP^{k,l})$. Further note that the number $K$ will also depend on $x^k$.

\begin{theorem}\label{ThPropQPBasic}Algorithm \ref{AlgQPBasic} terminates  after a finite number of iterations either with some feasible point and some descent direction $w$ indicating that QPDC is unbounded below or with some $\Q$-stationary solution.
\end{theorem}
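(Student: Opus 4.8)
The plan is to prove two independent facts: that each exit of Algorithm~\ref{AlgQPBasic} returns exactly the advertised object, and that the loop can be executed only finitely often. I would dispose of the two exit branches first, since they are the easier part, and then use the structure they reveal to argue finiteness.

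Consider first termination in step 2.). Here some $(QP^{k,l})$ is unbounded below, so Lemma~\ref{LemQPSolve} supplies $w$ with $Bw=0$, $Aw\in 0^+D(\nu^{k,l})$ and $d^Tw<0$. Since $\nu^{k,l}\in\J(F(x^k))$ forces $A_ix^k\in D_i^{\nu^{k,l}_i}$ for every $i$, the iterate satisfies $Ax^k\in D(\nu^{k,l})$, and as $Aw$ is a recession direction of the convex set $D(\nu^{k,l})$ the whole ray $x^k+tw$, $t\ge 0$, stays in $D(\nu^{k,l})\subset D$ and hence feasible for QPDC. Using $Bw=0$ one computes $q(x^k+tw)=q(x^k)+t\,d^Tw\to-\infty$, so QPDC is indeed unbounded below, as claimed.

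Consider next termination in step 3.). Now $q(x^k)=q(x^{k,l})$ for every $l$; since $x^k$ is feasible for $(QP^{k,l})$ while $x^{k,l}$ is its global minimizer, $x^k$ is itself a global minimizer of the convex program $\min q(x)$ over the polyhedral set $A^{-1}D(\nu^{k,l})$. For a convex program the first-order condition is necessary and sufficient, so $-\nabla q(x^k)\in\big(T_{A^{-1}D(\nu^{k,l})}(x^k)\big)^\circ$. As $D(\nu^{k,l})$ is polyhedral, $T_{A^{-1}D(\nu^{k,l})}(x^k)=A^{-1}Q_l$ with $Q_l:=T_{D(\nu^{k,l})}(F(x^k))$, and because $Q_l$ is a polyhedral cone \eqref{EqAssQ} yields $-\nabla q(x^k)\in A^TQ_l^\circ$ for each $l=1,\dots,K$. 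Intersecting over $l$ and applying the identity that underlies Theorem~\ref{ThBasisQStat} gives
\[-\nabla q(x^k)\in\bigcap_{l=1}^KA^TQ_l^\circ=A^T\Big(Q_1^\circ\cap\bigcap_{l=2}^K(\ker A^T+Q_l^\circ)\Big),\]
which, since $A=\nabla F(x^k)$, is precisely $\Q$-stationarity with respect to $(\nu^{k,1},\dots,\nu^{k,K})$ in the sense of Definitions~\ref{DefQStat} and~\ref{DefQStatMPDC}.

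It remains to prove finite termination. Every pass through step 4.) sets $x^{k+1}=x^{k,l}$ with $q(x^{k,l})<q(x^k)$, so the objective decreases strictly along the iterates. The decisive point is that for $k\ge 2$ each $x^k$ is the exact global minimizer of a subproblem associated with some $\nu\in\J$, whence $q(x^k)$ equals one of the optimal values $q^\ast(\nu):=\inf\{q(x)\mid Ax\in D(\nu)\}$, $\nu\in\J$; since $\J$ is finite these form a finite set, and a strictly decreasing sequence can meet a finite set only finitely often. Therefore step 4.) is executed only finitely many times and the algorithm must stop in step 2.) or step 3.). I expect this finiteness mechanism to be the one genuinely delicate ingredient: strict descent by itself does not preclude an infinite run, and one has to exploit that each subproblem is a convex quadratic program solved to \emph{exact} global optimality, so that the attained values are confined to the finite pool indexed by $\J$. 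By comparison the two exit characterizations are routine, the only care needed being the feasibility of $x^k$ for every $(QP^{k,l})$ — which guarantees $q(x^{k,l})\le q(x^k)$ — and the passage from global optimality of a polyhedral convex QP to the $\Q$-stationarity inclusion via \eqref{EqAssQ}.
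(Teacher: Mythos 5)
Your proof is correct and follows essentially the same route as the paper: the unbounded exit is justified via Lemma~\ref{LemQPSolve}, $\Q$-stationarity at the final iterate is obtained from the first-order conditions of the $K$ convex subproblems combined with the polarity identity behind Theorem~\ref{ThBasisQStat}, and finiteness rests on the fact that each iterate produced in step 4.) is an exact global minimizer of a subproblem indexed by the finite set $\J$ while the objective values strictly decrease. The paper phrases the finiteness argument as pairwise distinctness of the index vectors $\nu^k$ rather than of the optimal values $q^\ast(\nu)$, but this is the same mechanism.
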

\begin{proof}
  If Algorithm \ref{AlgQPBasic} terminates in step 2.) the output is a feasible point together with some  descent direction showing that QPDC is unbounded below. If the algorithm does not terminate in step 2.) the computed  sequence of function values $q(x^k)$ is strictly decreasing. Moreover, denoting $\nu^k:=\nu^{k-1,l}$ where $l$ is the index chosen in step 4., we see that for each $k\geq 2$  the point $x^k$  is global minimizer of the problem
  \[\qquad\min q(x)\quad\mbox{subject to}\quad Ax \in D(\nu^k).\]
  This shows that all the vectors $\nu^k$ must be pairwise different and since there is only a finite number of possible choices for $\nu^k$, the algorithm must stop in step 3.). We will now show that the final iterate $x^k$ is $\Q$-stationary with respect to $(\nu^{k,1},\ldots,\nu^{k,K})$. Since for each $l=1,\ldots,K$ the point $x^k$ is a global minimizer of the subproblem $(Q^{k,l})$, it also satisfies the first order optimality condition
  \[\skalp{\nabla q(x^k),u}\geq 0 \quad\mbox{for every $u\in\R^n$ satisfying}\ Au\in T_{D(\nu^{k,l})}(Ax^k)).\]
  This shows $\Q$-stationarity of $x^k$ and the theorem is proved.
\end{proof}

\section{On verifying $\Q_M$-stationarity for MPDC}
The following theorem is crucial for the verification of M-stationarity.

\begin{theorem}\label{ThLinQP}
\begin{enumerate}
\item[(i)] Let $\xb$ be feasible for the general program \eqref{EqGenProg}. If there exists a B-stationary solution of the program
\begin{equation}\label{EqLinQP}\min_{(u,v)\in\R^n\times\R^m} \skalp{\nabla f(\xb),u}+\frac 12\norm{v}^2\ \mbox{subject to}\ \nabla F(\xb)u+v\in \TD,
\end{equation}
then $\xb$ is M-stationary.
\item[(ii)] Let $\xb$ be B-stationary for the MPDC \eqref{EqMPDC} and assume that GGCQ holds at $\xb$. Then the program \eqref{EqLinQP} has a global solution.
\end{enumerate}
\end{theorem}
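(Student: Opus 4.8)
To prove part (i), I would start from a $B$-stationary solution $(\bar u,\bar v)$ of the quadratic program \eqref{EqLinQP} and extract the corresponding multiplier. Writing $\widetilde F(u,v):=\nabla F(\xb)u+v$, which is linear, set $\bar z:=\widetilde F(\bar u,\bar v)\in\TD$ and let $\widetilde\Omega:=\{(u,v)\in\R^n\times\R^m\mv \widetilde F(u,v)\in\TD\}$ be the feasible set. By \eqref{EqTanCone} the set $\TD$ is the union of the finitely many convex polyhedral cones $T_{D(\nu)}(F(\xb))$, so the constraint multifunction $(u,v)\rightrightarrows\widetilde F(u,v)-\TD$ is polyhedral and hence metrically subregular at every point of its graph by Robinson's result \cite{Rob81}; consequently GGCQ holds at $(\bar u,\bar v)$ and the hypotheses of Proposition \ref{PropImprInclLimNormalCone} are met for \eqref{EqLinQP}. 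Applying that proposition, together with \eqref{EqEqualityLinNormalCone} for the set $\TD$ at the point $\bar z$, yields $\widehat N_{\widetilde\Omega}(\bar u,\bar v)\subset\widetilde F^T N_{\TD}(\bar z)$. Since the adjoint acts by $\widetilde F^T\lambda=(\nabla F(\xb)^T\lambda,\lambda)$, the $B$-stationarity condition $-(\nabla f(\xb),\bar v)\in\widehat N_{\widetilde\Omega}(\bar u,\bar v)$ produces a multiplier $\lambda\in N_{\TD}(\bar z)$ with $\lambda=-\bar v$ and $-\nabla f(\xb)=\nabla F(\xb)^T\lambda$.

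The remaining step, and the only delicate point of part (i), is to pass from the normal cone $N_{\TD}(\bar z)$ at the generally nonzero point $\bar z$ to $N_{\TD}(0)=N_D(F(\xb))$, the last equality again being \eqref{EqEqualityLinNormalCone}. Here I would exploit that $\TD$ is a cone: for every $t>0$ one has $T_{\TD}(t\bar z)=T_{\TD}(\bar z)$ and hence $\widehat N_{\TD}(tz)=\widehat N_{\TD}(z)$ for $z\in\TD$. Taking sequences $z_k\to\bar z$ in $\TD$ and $\lambda_k\to\lambda$ with $\lambda_k\in\widehat N_{\TD}(z_k)$ from the definition of the limiting normal cone, and rescaling by $t_k\downarrow0$, gives $t_kz_k\to0$ in $\TD$ with $\lambda_k\in\widehat N_{\TD}(t_kz_k)$, whence $\lambda\in N_{\TD}(0)=N_D(F(\xb))$. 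Thus $-\nabla f(\xb)\in\nabla F(\xb)^TN_D(F(\xb))$, i.e. $\xb$ is M-stationary.

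For part (ii) I would regard \eqref{EqLinQP} as a QPDC and treat it piece by piece. For each $\nu\in\J(F(\xb))$ consider the convex quadratic program obtained by replacing $\TD$ with the single convex polyhedral cone $T_{D(\nu)}(F(\xb))$; since $(0,0)$ is feasible, Lemma \ref{LemQPSolve} applies and each such piece either admits a direction satisfying \eqref{EqQPDescDir} or possesses a global minimizer. For the objective of \eqref{EqLinQP} the Hessian vanishes in $u$ and equals the identity in $v$, so \eqref{EqQPDescDir} forces the $v$-component of the direction to be zero and reduces to the existence of $w$ with $\nabla F(\xb)w\in T_{D(\nu)}(F(\xb))\subset\TD$ and $\skalp{\nabla f(\xb),w}<0$. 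Such a $w$ lies in $\Tlin(\xb)$, contradicting $B$-stationarity of $\xb$, because \eqref{EqBstatPrim} and GGCQ \eqref{EqGGCQ} give $-\nabla f(\xb)\in(T_\Omega(\xb))^\circ=(\Tlin(\xb))^\circ$. Hence no piece has a descent direction and every piece has a global minimizer with finite optimal value. Since the feasible set of \eqref{EqLinQP} is the union over the finitely many $\nu\in\J(F(\xb))$ of the piece-feasible sets, its infimum equals the minimum of these finitely many finite values and is attained at the minimizer of an optimal piece, which is feasible for the whole program; thus \eqref{EqLinQP} has a global solution. The main obstacle is the cone-rescaling argument of part (i) that transports the multiplier from $\bar z$ to the origin, whereas the piecewise reduction in part (ii) becomes routine once the directions of Lemma \ref{LemQPSolve} are identified with linearized feasible descent directions excluded by $B$-stationarity.
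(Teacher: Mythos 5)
Your proposal is correct, and part (ii) coincides with the paper's own argument: piecewise application of Lemma \ref{LemQPSolve} over the decomposition \eqref{EqTanCone}, identification of any direction satisfying \eqref{EqQPDescDir} with a linearized descent direction ($w_v=0$, $\nabla F(\xb)w_u\in T_{D(\nu)}(F(\xb))\subset\TD$, $\skalp{\nabla f(\xb),w_u}<0$) that GGCQ and B-stationarity of $\xb$ exclude, and then selection of the best piece minimizer. (Your selection rule --- minimizing the full optimal value $\skalp{\nabla f(\xb),u_\nu}+\frac12\norm{v_\nu}^2$ over $\nu$ --- is the natural one; the paper's proof compares only the linear terms $\skalp{\nabla f(\xb),u_\nu}$, which reads like a small slip, so your write-up is in fact cleaner here.) In part (i) you take a genuinely different, though related, route. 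The paper observes that the constraint matrix $(\nabla F(\xb)\;\vdots\;I)$ of \eqref{EqLinQP} is surjective, so \cite[Exercise 6.7]{RoWe98} gives the exact identity $\widehat N_\Gamma(\bar u,\bar v)=(\nabla F(\xb)\;\vdots\;I)^T\widehat N_{\TD}(\bar z)$, where $\Gamma$ is the feasible set of \eqref{EqLinQP} and $\bar z=\nabla F(\xb)\bar u+\bar v$; this needs no constraint qualification machinery at all and produces a multiplier in the \emph{regular} normal cone $\widehat N_{\TD}(\bar z)$. You instead go through Robinson's polyhedrality theorem to obtain metric subregularity, hence GACQ and GGCQ, and then apply Proposition \ref{PropImprInclLimNormalCone}; this yields only a multiplier in the \emph{limiting} cone $N_{\TD}(\bar z)$, which is weaker but entirely sufficient for M-stationarity, at the price of invoking heavier machinery. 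Conversely, your transport of the multiplier from $\bar z$ to the origin --- using that $\TD$ is a cone, so $T_{\TD}(t\bar z)=T_{\TD}(\bar z)$ and hence $\widehat N_{\TD}(tz)=\widehat N_{\TD}(z)$ for $t>0$, and rescaling the approximating sequences --- is a self-contained proof of the inclusion $N_{\TD}(\bar z)\subset N_{\TD}(0)$, which the paper obtains by citing \cite[Proposition 6.27]{RoWe98}; combined with \eqref{EqEqualityLinNormalCone} this closes the argument. Both proofs are sound: the paper's is shorter and gives the sharper regular-normal multiplier, while yours is self-contained on the cone-rescaling step and relies only on polyhedral calculus that the paper itself develops and uses elsewhere.
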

\begin{proof}
  (i) Let $(\bar u,\bar v)$ denote a B-stationary solution, i.e.
  $-(\nabla f(\xb),\bar v)\in \widehat N_\Gamma(\bar u,\bar v)$,  where $\Gamma=(\nabla F(\xb)\;\vdots\; I)^{-1}\TD$. Since the matrix $(\nabla F(\xb)\;\vdots\; I)$ obviously has full rank, we have
  $\widehat N_\Gamma(\bar u,\bar v)=(\nabla F(\xb)\;\vdots\; I)^T\widehat N_{\TD}(\nabla F(\xb)\bar u +\bar v)$ by \cite[Exercise 6.7]{RoWe98}. Thus there exists a multiplier $\lambda\in \widehat N_{\TD}(\nabla F(\xb)\bar u +\bar v)$ such that $-\nabla f(\xb)=\nabla F(\xb)^T\lambda$ and $-\bar v=\lambda$. Using \cite[Proposition 6.27]{RoWe98} we have $\widehat N_{\TD}(\nabla F(\xb)\bar u +\bar v)\subset N_{\TD}(\nabla F(\xb)\bar u +\bar v)\subset N_{\TD}(0)\subset N_D(F(\xb))$  establishing M-stationarity of $\xb$.

  (ii) Consider for arbitrarily fixed $\nu\in \J(F(\xb))$ the convex quadratic program
  \begin{equation}\label{EqAuxQP}\min_{(u,v)\in\R^n\times\R^m} \skalp{\nabla f(\xb),u}+\frac 12\norm{v}^2\ \mbox{subject to}\ \nabla F(\xb)u+v\in T_{D(\nu)}(F(\xb)).\end{equation}
  Assuming that this quadratic program does not have a solution, by Lemma \ref{LemQPSolve} we could find a direction $(w_u,w_v)$ satisfying
  \[\left(\begin{array}{cc}0&0\\0&I\end{array}\right)\left(\begin{array}{c}w_u\\w_v\end{array}\right)=0,\ \nabla F(\xb)w_u+w_v\in 0^+T_{D(\nu)}(F(\xb)),\ \skalp{\nabla f(\xb),w_u}+\skalp{0,w_v}<0.\]
  This implies $w_v=0$, $\nabla F(\xb)w_u\in 0^+T_{D(\nu)}(F(\xb))=T_{D(\nu)}(F(\xb))\subset \TD$ and $\skalp{\nabla f(\xb),w_u}<0$ and thus, together with GGCQ, $-\nabla f(\xb)\not\in(\Tlin(\xb))^\circ=\NrD$ contradicting our assumption that $\xb$ is B-stationary for \eqref{EqMPDC}. Hence the quadratic program \eqref{EqAuxQP} must possess some global solution $(u_\nu,v_\nu)$. By choosing $\bar \nu\in \J(F(\xb))$ such that $\skalp{\nabla f(\xb),u_{\bar\nu}}=\min\{\skalp{\nabla f(\xb),u_\nu}\mv \nu\in \J(F(\xb))\}$ it follows from \eqref{EqTanCone} that $(u_{\bar\nu},v_{\bar\nu})$ is a global solution of \eqref{EqLinQP}.
\end{proof}
We now want to apply Algorithm \ref{AlgQPBasic} to the problem \eqref{EqLinQP}. Note that the point $(0,0)$ is feasible for \eqref{EqLinQP} and therefore we can start Algorithm \ref{AlgQPBasic} with $(u^1,v^1)=(0,0)$.
\begin{corollary}
  \label{CorVerifMStat}Let $\xb$ be feasible for the MPDC \eqref{EqMPDC} and apply Algorithm \ref{AlgQPBasic} to the QPDC \eqref{EqLinQP}. If the algorithm returns an iterate together with some descent direction indicating that \eqref{EqLinQP} is unbounded below and if GGCQ is fulfilled at $\xb$, then $\xb$ is not B-stationary. On the other hand, if the algorithm returns a $\Q$-stationary solution, then $\xb$ is M-stationary.
\end{corollary}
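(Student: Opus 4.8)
The plan is to combine the termination dichotomy of Algorithm \ref{AlgQPBasic} recorded in Theorem \ref{ThPropQPBasic} with the bridge between the auxiliary program \eqref{EqLinQP} and the stationarity of $\xb$ provided by Theorem \ref{ThLinQP}. Since the point $(0,0)$ is feasible for \eqref{EqLinQP}, the algorithm can be started there, and by Theorem \ref{ThPropQPBasic} it stops after finitely many iterations in exactly one of two ways: either in step 2.) with a feasible point and a descent direction $w$ fulfilling \eqref{EqQPDescDir} for \eqref{EqLinQP}, or in step 3.) with a $\Q$-stationary solution of \eqref{EqLinQP}. I would treat these two outcomes separately.

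First I would handle the unbounded case. If the algorithm terminates in step 2.), then the returned descent direction certifies that \eqref{EqLinQP} is unbounded below, so in particular \eqref{EqLinQP} possesses no global solution. Theorem \ref{ThLinQP}(ii) asserts that whenever GGCQ holds at $\xb$ and $\xb$ is B-stationary for \eqref{EqMPDC}, the program \eqref{EqLinQP} \emph{does} have a global solution. Reading this contrapositively, under the standing assumption that GGCQ holds at $\xb$, the absence of a global solution of \eqref{EqLinQP} forces $\xb$ to fail B-stationarity, which is the first assertion.

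Next I would treat the $\Q$-stationary case, where the real work lies. Here the algorithm returns a point $(\bar u,\bar v)$ that is merely $\Q$-stationary for \eqref{EqLinQP}, whereas Theorem \ref{ThLinQP}(i) requires a genuinely B-stationary solution of \eqref{EqLinQP} to conclude M-stationarity of $\xb$. The key step is therefore to upgrade $\Q$-stationarity to B-stationarity for the auxiliary program. To this end I would exploit that \eqref{EqLinQP}, viewed as an MPDC in the variable $(u,v)$, has constraint mapping $(u,v)\mapsto \nabla F(\xb)u+v$ whose Jacobian $(\nabla F(\xb)\;\vdots\; I)$ is surjective because of the identity block. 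Consequently $\ker(\nabla F(\xb)\;\vdots\; I)^T=\{0\}$, so condition \eqref{EqSurjectivityMPDC} holds trivially for this program. Moreover surjectivity of the constraint Jacobian yields metric regularity, hence metric subregularity, hence GACQ and a fortiori GGCQ at $(\bar u,\bar v)$. Thus all hypotheses of Theorem \ref{ThDC_QStat}(ii) are met, and $\Q$-stationarity of $(\bar u,\bar v)$ implies its S-stationarity and therefore its B-stationarity for \eqref{EqLinQP}. Feeding this B-stationary solution into Theorem \ref{ThLinQP}(i) then gives M-stationarity of $\xb$, completing the second assertion.

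The main obstacle I anticipate is exactly this upgrade in the $\Q$-stationary case: the algorithm only certifies the weaker $\Q$-stationarity, while the M-stationarity conclusion rests on the stronger B-stationarity of the auxiliary problem. The resolution hinges entirely on the full-rank structure of $(\nabla F(\xb)\;\vdots\; I)$ — indeed, this is precisely the reason the slack variable $v$ and the identity block were built into the formulation \eqref{EqLinQP}. The remaining care is to confirm that GGCQ is automatic for the auxiliary program so that Theorem \ref{ThDC_QStat}(ii) may legitimately be invoked; this I expect to follow routinely from the surjectivity together with the polyhedrality of $\TD$.
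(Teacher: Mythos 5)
Your proposal is correct and follows essentially the same route as the paper: the unbounded case is handled by the contrapositive of Theorem \ref{ThLinQP}(ii), and in the $\Q$-stationary case the full rank of $(\nabla F(\xb)\;\vdots\; I)$ lets Theorem \ref{ThDC_QStat}(ii) upgrade $\Q$-stationarity of the returned point to B-stationarity, after which Theorem \ref{ThLinQP}(i) yields M-stationarity of $\xb$. Your extra remarks (that \eqref{EqSurjectivityMPDC} holds trivially since $\ker(\nabla F(\xb)\;\vdots\; I)^T=\{0\}$, and that GGCQ for the auxiliary program is automatic from surjectivity) simply make explicit what the paper leaves implicit.
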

\begin{proof}Observe that in case when Algorithm \ref{AlgQPBasic} returns a $\Q$-stationary solution, by Theorem \ref{ThDC_QStat}(ii) this solution is B-stationary  because the Jocobian of the constraints $(\nabla F(\xb)\;\vdots\;I)$  obviously has full rank. Now the statement follows from Theorem \ref{ThLinQP}.
\end{proof}

We now want to analyze how the output of Algorithm \ref{AlgQPBasic} can be further utilized. Recalling that $\TD$ has the disjunctive structure
\[\TD=\prod_{i=1}^{m_D}\Big(\bigcup_{j\in \J_i(F(\xb))}T_{D^j_i}(F_i(\xb))\Big),\]
we define for $y=(y_1,\ldots,y_{m_D})\in\TD$ the index sets
\[\A_i^{T_D}(y):=\{j\in \J_i(F(\xb))\mv y_i\in T_{D^j_i}(F_i(\xb))\},\ i=1,\ldots,m_D.\]
Further we choose for each $i=1,\ldots,m_D$ some index set $\J_i^{T_D}(y)\subset\A_i^{T_D}(y)$ such that
\begin{equation}\label{EqActJQP}
T_{T_{D_i}(F_i(\xb))}(y_i)=\bigcup_{j\in \J^{T_D}_i(y)}T_{T_{D_i^j}(F_i(\xb))}(y_i)
\end{equation}
and set
\[\J^{T_D}(y):=\prod_{i=1}^{m_D}\J_i^{T_D}(y).\]
Note that we always have
\[\J^{T_D}(y)\subset \J(F(\xb)).\]
In order to verify $\Q$-stationarity for the problem \eqref{EqLinQP} at some feasible point $(u,v)$, we have to consider the set $\Q^{T_D}(u,v)$ consisting of all  $(\nu^1,\ldots,\nu^K)$ with $\nu^l\in \J^{T_D}(\nabla F(\xb)u+v)$, $l=1,\ldots,K$ such that
\[\{\nu^1_i,\ldots,\nu^K_i\}=\J^{T_D}_i(\nabla F(\xb)u+v),\ i=1,\ldots,m_D.\]
At the $k$-th iterate $(u^k,v^k)$ we have to choose $(\nu^{k,1},\ldots,\nu^{k,K})\in\Q^{T_D}(u^k,v^k)$ and then for each $l=1,\ldots,K$ we must analyze the convex quadratic program
\[(QP^{k,l})\qquad\min_{u,v}\skalp{\nabla f(\xb),u}+\frac 12 \norm{v}^2 \quad\mbox{subject to}\quad \nabla F(\xb)u+v\in T_{D(\nu^{k,l})}(F(\xb)).\]
If for some $\bar l\in\{1,\ldots,K\}$ this quadratic program is unbounded below then Algorithm \ref{AlgQPBasic} returns the index $\bar \nu:=\nu^{k,\bar l}$ together with a descent direction $(w_u,w_v)$ fulfilling, as argued in the proof of Theorem \ref{ThLinQP}(ii),
\[w_v=0, \ \nabla F(\xb)w_u\in 0^+T_{D(\bar\nu)}(F(\xb))=T_{D(\bar\nu)}(F(\xb)), \skalp{\nabla f(\xb),w_u}<0.\] Therefore $w_u$ constitutes a feasible descent direction, provided GACQ holds at $\xb$, i.e., for every $\alpha>0$ sufficiently small the projection of $\xb+\alpha w_u$ on the feasible set $F^{-1}(D)$ yields a point with a smaller objective function value than $\xb$. If GACQ also holds for the constraint $F(x)\in D(\bar\nu)$ at $\xb$, then we can also project the point $\xb+\alpha w_u$ on $F^{-1}(D(\bar\nu))$ in order to reduce the objective function.

Now assume that the final iterate $(u^k,v^k)$ of Algorithm \ref{AlgQPBasic} is $\Q$-stationary for \eqref{EqLinQP} and consequently $\xb$ is M-stationary for the  MPDC \eqref{EqMPDC}. Setting $\lambda:=-v^k$, the first order optimality conditions for the quadratic programs $(QP^{k,l})$ result in
\begin{eqnarray*}-\nabla f(\xb)&=&\nabla F(\xb)^T\lambda,\\
  \lambda&\in&\bigcap_{l=1}^K N_{T_{D(\nu^{k,l})}(F(\xb))}(\nabla F(\xb)u^k+v^k)=\widehat N_{T_D(F(\xb))}(\nabla F(\xb)u^k+v^k)\subset N_D(F(\xb)).
\end{eqnarray*}
From this we conclude $-\nabla f(\xb)\in \nabla F(\xb)^T(Q_1^\circ\cap N_D(F(\xb))$ with $Q_1:=T_{D(\bar\nu)}(F(\xb))\subset T_{T_{D(\bar \nu)}(F(\xb))}(\nabla F(\xb)u^k+v^k)$ where $\bar\nu=\nu^{k,1}$ is the index vector returned from Algorithm \ref{AlgQPBasic}.  Now choosing $\nu^2,\ldots,\nu^K$ such that $(\bar\nu,\nu^2,\ldots,\nu^K)\in\Q(\xb)$ we can simply check by testing $-\nabla f(\xb)\in N_{D(\nu^l)}(F(\xb))$, $l=2,\ldots,K,$ whether $\xb$ is $\Q_M$ stationary or $\xb$ is not B-stationary.

Further we have  the following corollary.
\begin{corollary}Let $\xb$ be B-stationary for the MPDC \eqref{EqMPDC} and assume that GGCQ is fulfilled at $\xb$. Let $\bar \nu$ be the index vector returned by Algorithm \ref{AlgQPBasic} applied to \eqref{EqLinQP}. Then $\bar\nu\in \J(F(\xb))$ and for every $\nu^2,\ldots,\nu^K$ with $(\bar\nu,\nu^2,\ldots,\nu^K)\in\Q(\xb)$ the point $\xb$ is $\Q_M$-stationary with respect to $(\bar\nu,\nu^2,\ldots,\nu^K)$.
\end{corollary}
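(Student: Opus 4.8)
The plan is to recognize that the corollary is essentially the concrete, algorithmic incarnation of Lemma \ref{LemQ_MStat}, so the proof reduces to verifying the three ingredients that Lemma needs: that Algorithm \ref{AlgQPBasic} terminates with a usable multiplier rather than with unboundedness, that the returned branch $\bar\nu$ is admissible, and that this multiplier can be combined with the $\Q$-stationarity inclusions for the remaining branches.

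First I would show that Algorithm \ref{AlgQPBasic} applied to \eqref{EqLinQP} cannot stop in step 2.) with a descent direction. Indeed, by Corollary \ref{CorVerifMStat} such an output together with GGCQ at $\xb$ would contradict B-stationarity of $\xb$ (equivalently, Theorem \ref{ThLinQP}(ii) guarantees that \eqref{EqLinQP} has a global solution and is therefore bounded below). Hence by Theorem \ref{ThPropQPBasic} the algorithm returns a $\Q$-stationary iterate $(u^k,v^k)$ and an index vector $\bar\nu=\nu^{k,1}$. Since the branches selected inside the algorithm satisfy $\nu^{k,l}\in\J^{T_D}(\nabla F(\xb)u^k+v^k)\subset\J(F(\xb))$, the claim $\bar\nu\in\J(F(\xb))$ is immediate.

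Next I would extract the multiplier exactly as in the paragraph preceding the statement: setting $\lambda:=-v^k$, the first-order conditions of the subproblems $(QP^{k,l})$ give $-\nabla f(\xb)=\nabla F(\xb)^T\lambda$ and $\lambda\in\bigcap_{l=1}^K N_{T_{D(\nu^{k,l})}(F(\xb))}(\nabla F(\xb)u^k+v^k)$. Because the selected branches cover all active pieces of the disjunctive cone $\TD$ at $\nabla F(\xb)u^k+v^k$, this intersection collapses to $\widehat N_{\TD}(\nabla F(\xb)u^k+v^k)$, which, as in the proof of Theorem \ref{ThLinQP}(i) (using \cite[Proposition 6.27]{RoWe98} and \eqref{EqEqualityLinNormalCone}), lies in $N_D(F(\xb))$. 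Since $Q_1:=T_{D(\bar\nu)}(F(\xb))$ is a convex cone, the corresponding factor of the membership yields $\lambda\in N_{Q_1}(\nabla F(\xb)u^k+v^k)\subset Q_1^\circ$, so altogether $\lambda\in N_D(F(\xb))\cap Q_1^\circ$ and $-\nabla f(\xb)\in\nabla F(\xb)^T\big(N_D(F(\xb))\cap Q_1^\circ\big)$.

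Finally I would close along the lines of Lemma \ref{LemQ_MStat}. For any $\nu^2,\ldots,\nu^K$ with $(\bar\nu,\nu^2,\ldots,\nu^K)\in\Q(\xb)$ the cones $Q_l:=T_{D(\nu^l)}(F(\xb))$ are convex polyhedral cones in $\TD$, hence satisfy \eqref{EqAssQ}; since $\xb$ is B-stationary under GGCQ, the single-cone inclusion underlying Corollary \ref{CorBasicQStat} gives $-\nabla f(\xb)\in\widehat N_\Omega(\xb)\subset\nabla F(\xb)^TQ_l^\circ$ for $l=2,\ldots,K$. Intersecting these with the inclusion from the previous step and applying the identity \cite[Lemma 1]{BeGfr16b} repeatedly rewrites $\nabla F(\xb)^T\big(N_D(F(\xb))\cap Q_1^\circ\big)\cap\bigcap_{l=2}^K\nabla F(\xb)^TQ_l^\circ$ as $\nabla F(\xb)^T\big(N_D(F(\xb))\cap Q_1^\circ\cap\bigcap_{l=2}^K(\ker\nabla F(\xb)^T+Q_l^\circ)\big)$, which is precisely $\Q_M$-stationarity with respect to $(\bar\nu,\nu^2,\ldots,\nu^K)$. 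I expect the only genuinely delicate point to be the middle step, namely correctly identifying $\bigcap_l N_{T_{D(\nu^{k,l})}(F(\xb))}(\nabla F(\xb)u^k+v^k)$ with $\widehat N_{\TD}(\nabla F(\xb)u^k+v^k)$ and tracking it into $N_D(F(\xb))$, since this is where the tangent-of-tangent structure of the auxiliary QPDC \eqref{EqLinQP} must be reconciled with the limiting normal cone of $D$ at $F(\xb)$; everything else is bookkeeping with the linear-algebra identity.
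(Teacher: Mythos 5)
Your proposal is correct and follows essentially the same route as the paper, whose own justification is exactly the discussion preceding the corollary (multiplier $\lambda=-v^k$ from the first-order conditions of the subproblems, $\lambda\in \widehat N_{T_D(F(\xb))}(\nabla F(\xb)u^k+v^k)\subset N_D(F(\xb))\cap Q_1^\circ$) combined with the intersection argument of Lemma \ref{LemQ_MStat} via \cite[Lemma 1]{BeGfr16b}. You additionally make explicit the two points the paper leaves implicit, namely that B-stationarity plus GGCQ rules out termination with a descent direction (Theorem \ref{ThLinQP}(ii)) and that $\bar\nu\in\J^{T_D}(\nabla F(\xb)u^k+v^k)\subset\J(F(\xb))$, which is exactly what a complete write-up requires.
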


\section{Numerical aspects}

In practice the point $\xb$ which should be checked for M-stationarity and $\Q_M$-stationarity, respectively, often is not known exactly. E.g., $\xb$ can be the limit point of a sequence generated by some numerical method for solving MPDC. Hence let us assume that we are given some point $\tilde x$ close to $\xb$ and  we want to state some rules when we can consider $\tilde x$ as approximately M-stationary or $\Q_M$-stationary. Let us assume that the convex polyhedral sets $D_i^j$ have the representation
\[D_i^j=\{y\mv \skalp{a_l^{i,j},y}\leq b_l^{i,j},\ l=1,\ldots, p^{i,j}\},\ i=1,\ldots,m_D,\ j=1,\ldots,K_i,\]
where without loss of generality we assume $\norm{a_l^{i,j}}=1$.

We use here the following approach.
\begin{algorithm}\label{AlgCheckM}\mbox{ }\setlength{\mylength}{\hsize}\addtolength{\mylength}{-2\parindent}

{\bf Input: } A point $\tilde x$ and small positive parameters $\epsilon,\sigma,\eta$.

1.) \begin{minipage}[t]{\mylength}Calculate the index sets
\[\tilde \A_i(\tilde x,\epsilon):=\{j\in\{1,\ldots, K_i\}\mv \dist{F_i(\tilde x), D_i^j}\leq \epsilon\},\ i=1,\ldots,m_D\]
\[\tilde \I_i^j(\tilde x,\epsilon):=\{l\in\{1,\ldots,p^{i,j}\}\mv \skalp{a_l^{i,j},F_i(\tilde x)}\geq b_l^{i,j} -\epsilon\},\ i=1,\ldots,m_D,\ j\in\tilde \A_i(\tilde x,\epsilon)\]
and  the convex polyhedral cones
\[T_i^j(\tilde x,\epsilon)=\{ v\mv \skalp{a_l^{i,j},v} \leq 0, l\in \tilde \I_i^j(\tilde x,\epsilon)\},\ i=1,\ldots,m_D,\ j\in\tilde A_i(\tilde x,\epsilon).\]
Assume that $\tilde\A_i(\tilde x,\epsilon)\not=\emptyset$, $i=1,\ldots,m_D$.\end{minipage}

2.) \begin{minipage}[t]{\mylength}
Consider
\begin{eqnarray*}QPDC(\tilde x,\epsilon,\sigma)\qquad \min_{u,v}&&\skalp{\nabla f(\tilde x),u}+\frac\sigma2\norm{u}^2+\frac 12\norm{v}^2\\
 \mbox{subject to}&&\nabla F(\tilde x)u+v\in \prod_{i=1}^{m_D}\Big(\bigcup_{j\in\tilde \A_i(\tilde x,\epsilon)}T_i^j(\tilde x,\epsilon)\Big).
\end{eqnarray*}
Let $(\tilde u,\tilde v)$ and $\bar \nu$ denote the output of Algorithm \ref{AlgQPBasic} applied to $QPDC(\tilde x,\epsilon,\sigma)$.\end{minipage}

3. \begin{minipage}[t]{\mylength}If $\sigma\norm{\tilde u}> \eta$ consider the nonlinear programming problem
\[\min f(x)\qquad \mbox{subject to}\qquad F(x)\in D(\bar\nu)\]
in order to improve $\tilde x$.\end{minipage}

4.) \begin{minipage}[t]{\mylength} Otherwise consider $\tilde x$ as approximately M-stationary and compute $\nu^2,\ldots,\nu^K\in\prod_{i=1}^{m_D}\tilde \A_i(\tilde x,\epsilon)$ such that
\[T_i^{\bar \nu_i}(\tilde x,\epsilon)\cup\bigcup_{l=2}^K T_i^{\nu^l_i}(\tilde x,\epsilon)=\bigcup_{j\in \tilde \A_i(\tilde x,\epsilon)}T_i^j(\tilde x,\epsilon),\ i=1,\ldots,m_D.\]
If
\begin{equation}\label{EqAuxLP}\min\Big\{\nabla f(\tilde x)u\mv\begin{array}{l} \nabla F(\tilde x)u\in \prod_{i=1}^{m_D}T_i^{\nu^l_i}(\tilde x,\epsilon),\\
 -1\leq u_i\leq 1, i=1,\ldots,n\end{array}\Big\}\geq -\eta,\ l=2,\ldots,K,
\end{equation}
accept $\tilde x$ as approximately $\Q_M$-stationary. Otherwise consider the nonlinear programming problem
\[\min f(x)\qquad \mbox{subject to}\qquad F(x)\in D(\nu^{\bar l})\]
in order to improve $\tilde x$, where $\bar l\in\{2,\ldots,K\}$ denotes some index violating \eqref{EqAuxLP}.
\end{minipage}
\end{algorithm}

In the first step of Algorithm \ref{AlgCheckM} we want to estimate the tangent cone $T_D(F(\xb))$. In fact, to calculate $T_D(F(\xb))$ we need not to know the point $F(\xb)$, we only need the index sets of constraints active at $\xb$ and these index sets are approximated by $\epsilon$-active constraints. Note that whenever $\tilde \A_i(\tilde x,\epsilon)=\tilde \A_i(\xb,0)=\A_i(F(\xb))$ and $\tilde \I_i^j(\tilde x,\epsilon)=\tilde \I_i^j(\xb,0)$, $i=1,\ldots,m_D$, $j\in\A_i(F(\xb))$ this approach yields the exact tangent cones $T_{D^j_i}(F(\xb))=T_i^j(\tilde x,\epsilon)$ for all $i=1,\ldots,m_D$, $j\in\A_i(F(\xb))$. To be consistent with the notation of Section 4 we make the convention that in this case
the index vector $\bar \nu$ computed in step 2.) belongs to $\J(\xb)$ and also, whenever we determine $\nu^2,\ldots\nu^K$ is step 4.), we have $(\bar\nu,\nu^2,\ldots,\nu^K)\in\Q(\xb)$.
The regularization term $\frac\sigma 2\norm{u}^2$ in $QPDC(\tilde x,\epsilon,\sigma)$ forces the objective to be strictly convex and therefore Algorithm \ref{AlgQPBasic} will always terminate with a $\Q$-stationary solution. Further note that the verification of \eqref{EqAuxLP} requires the solution of $K-1$ linear optimization problems.

The following theorem justifies Algorithm \ref{AlgCheckM}. Im the sequel we denote by $\M(\xb)$ $(\M_{sub}(\xb))$ the set of all $\nu\in \J(\xb)$ such that the mapping $F(\cdot)-D(\nu)$ is metrically regular near $(\xb,0)$ (metrically subregular at $(\xb,0)$).
\begin{theorem}\label{ThNumCheck} Let $\xb$ be feasible for the MPDC \eqref{EqMPDC} and assume that $\nabla f$ and $\nabla F$ are Lipschitz near $\xb$. Consider sequences $x_t\to\xb$, $\epsilon_t\downarrow 0$, $\sigma_t\downarrow 0$ and $\eta_t\downarrow 0$  with
\[\lim_{t\to\infty}\frac{\norm{x_t-\xb}}{\epsilon_t}= \lim_{t\to\infty}\frac{\frac{\sigma_t}{\eta_t}+\norm{x_t-\xb}}{\eta_t}= 0\]
and let $(\tilde u_t,\tilde v_t)$,  $\bar \nu^t$ and eventually $\nu^{t,2}\ldots,\nu^{t,K_t}$ and $\bar l_t$ denote the output of Algorithm \ref{AlgCheckM} with input data $(x_t,\epsilon_t,\sigma_t,\eta_t)$.
\begin{enumerate}
\item[(i)]
For all $t$ sufficiently large and for all  $i\in \{1,\ldots,m_D\}$ we have
\begin{equation}\label{EqExactIndexSet}\tilde \A_i( x_t,\epsilon_t)=\A_i(F(\xb)),\ \tilde \I_i^j( x_t,\epsilon_t)=\tilde \I_i^j(\xb,0),\ j\in\A_i(F(\xb)).\end{equation}
\item[(ii)]Assume that the mapping $x\rightrightarrows F(x)-D$ is metrically regular near $(\xb,0)$.
\begin{enumerate}
 \item[(a)] If $\xb$ is B-stationary then for all $t$ sufficiently large the point $x_t$ is accepted as approximately M-stationary and approximately $\Q_M$-stationary.
 \item[(b)] If for infinitely many $t$ the point $x_t$ is accepted as approximately M-stationary then $\xb$ is M-stationary.
 \item[(c)] If for infinitely many $t$ the point $x_t$ is accepted as approximately $\Q_M$-stationary and $\{\bar \nu^t,\nu^{t,2},\ldots,\nu^{t,K_t}\}\subset \M(\xb)$ then $\xb$ is $\Q_M$-stationary.
 \item[(d)] For every $t$ sufficiently large such that the point $x_t$ is not accepted as approximately M-stationary  and $\bar\nu^t\in\M_{sub}(\xb)$ we have $\min\{f(x)\mv F(x)\in D(\bar\nu^t)\}< f(\xb)$.
 \item[(e)] For every $t$ sufficiently large such that the point $x_t$ is not accepted as approximately $\Q_M$-stationary  and $\nu^{t,\bar l_t}\in \M_{sub}(\xb)$ we have $\min\{f(x)\mv F(x)\in D(\nu^{t,\bar l_t})\}< f(\xb)$.
\end{enumerate}
\end{enumerate}
\end{theorem}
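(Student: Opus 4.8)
The plan is to treat the five assertions in the order (i), (ii)(b), (ii)(a), then (ii)(c)--(e), since each later estimate reuses the optimality machinery set up earlier. For (i) I would argue purely from Lipschitz continuity of $F$ and the rate $\norm{x_t-\xb}/\epsilon_t\to0$. If $j\in\A_i(F(\xb))$ then $\dist{F_i(\xb),D_i^j}=0$, so $\dist{F_i(x_t),D_i^j}\le L\norm{x_t-\xb}\le\epsilon_t$ for large $t$; if $j\notin\A_i(F(\xb))$ the distance is bounded below by a positive constant and eventually exceeds $\epsilon_t$. The equality $\tilde\I_i^j(x_t,\epsilon_t)=\tilde\I_i^j(\xb,0)$ follows the same way, separating active from inactive facets of $D_i^j$ and using $\norm{a_l^{i,j}}=1$. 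Hence $T_i^j(x_t,\epsilon_t)=T_{D_i^j}(F_i(\xb))$ and the feasible cone of $QPDC(x_t,\epsilon_t,\sigma_t)$ equals $\TD$ for all large $t$; the subproblem becomes $\min\skalp{\nabla f(x_t),u}+\frac{\sigma_t}{2}\norm{u}^2+\frac12\norm{v}^2$ subject to $\nabla F(x_t)u+v\in\TD$, which is strictly convex, so Algorithm~\ref{AlgQPBasic} returns its unique global solution $(\tilde u_t,\tilde v_t)$.

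The algebraic core, recorded once, is the optimality system: since $(\nabla F(x_t)\,\vdots\,I)$ has full rank, there is $\lambda_t=-\tilde v_t\in\widehat N_{\TD}(\nabla F(x_t)\tilde u_t+\tilde v_t)\subset N_D(F(\xb))$ with $\sigma_t\tilde u_t=-\nabla f(x_t)-\nabla F(x_t)^T\lambda_t$ and complementarity $\skalp{\lambda_t,\nabla F(x_t)\tilde u_t+\tilde v_t}=0$. Metric regularity of $F(\cdot)-D$ supplies, via Proposition~\ref{PropMetrReg}, a uniform modulus $\kappa$ and a neighborhood on which $\norm{\lambda}\le\kappa\norm{\nabla F(x_t)^T\lambda}$ for $\lambda\in N_D(F(\xb))$, and it also gives GACQ, hence GGCQ, at $\xb$. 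Claim (b) is then immediate: if $\sigma_t\norm{\tilde u_t}\le\eta_t\to0$ along a subsequence, the identity forces $\nabla F(x_t)^T\lambda_t\to-\nabla f(\xb)$, the uniform bound keeps $\{\lambda_t\}$ bounded, and any cluster point $\lambda\in N_D(F(\xb))$ (closedness) satisfies $-\nabla f(\xb)=\nabla F(\xb)^T\lambda$, i.e.\ $\xb$ is M-stationary.

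For the acceptance statement (a) I would first note that, under GGCQ and metric regularity, B-stationarity of $\xb$ is equivalent to boundedness below of the tangent-cone subproblem over every branch, i.e.\ $-\nabla f(\xb)\in\nabla F(\xb)^TQ_\nu^\circ$ for each $\nu\in\J(F(\xb))$, where $Q_\nu=T_{D(\nu)}(F(\xb))$. Passing to a subsequence on which $\bar\nu^t$ is constant, $(\tilde u_t,\tilde v_t)$ solves a single-cone problem, and I would combine two estimates: the optimality system and complementarity yield the value identity $-\skalp{\nabla f(x_t),\tilde u_t}=\norm{\tilde v_t}^2+\sigma_t\norm{\tilde u_t}^2$; retracting $\nabla F(x_t)\tilde u_t$ onto $Q_\nu$ by metric regularity, together with the fact that the per-unit branch descent rate degrades only like $O(\norm{x_t-\xb})$ from its exact value $0$, yields $-\skalp{\nabla f(x_t),\tilde u_t}\le O(\norm{x_t-\xb})\norm{\tilde u_t}+O(1)\norm{\tilde v_t}$. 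Eliminating $\norm{\tilde v_t}$ between the two gives the key rate
\[
\sigma_t\norm{\tilde u_t}=O\big(\sqrt{\sigma_t}+\norm{x_t-\xb}\big),
\]
which is $o(\eta_t)$ precisely because the hypotheses force $\sqrt{\sigma_t}/\eta_t\to0$ and $\norm{x_t-\xb}/\eta_t\to0$; thus $\sigma_t\norm{\tilde u_t}\le\eta_t$ and $x_t$ is accepted as M-stationary. For the $\Q_M$ part, each program in \eqref{EqAuxLP} has, for large $t$, feasible cone $Q_{\nu^l}$ and objective $\skalp{\nabla f(x_t),\cdot}$; its exact counterpart has value $\ge0$ by the branch boundedness above, and the box $\norm{u}_\infty\le1$ makes the value Lipschitz in the data, so a perturbation of size $O(\norm{x_t-\xb})=o(\eta_t)$ keeps every value $\ge-\eta_t$, giving $\Q_M$-acceptance.

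The remaining claims are limiting/qualification arguments. For (c), on the subsequence of $\Q_M$-acceptances I would stabilize the finitely many index vectors $(\bar\nu^t,\nu^{t,2},\dots,\nu^{t,K_t})$ to a fixed collection, pass $\lambda_t\to\lambda\in N_D(F(\xb))$ as in (b), and read off from the accepted conditions that $\lambda\in Q_{\bar\nu}^\circ$ and $-\nabla f(\xb)\in\nabla F(\xb)^T(\ker\nabla F(\xb)^T+Q_{\nu^l}^\circ)$; the hypothesis $\{\bar\nu^t,\nu^{t,l}\}\subset\M(\xb)$ secures the polarity identities \eqref{EqAssQ} for these branches, so the pieces assemble into the $\Q_M$-inclusion of Definition~\ref{DefQStatMPDC}. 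For (d)--(e), non-acceptance means $\sigma_t\norm{\tilde u_t}>\eta_t$ (resp.\ some value in \eqref{EqAuxLP} below $-\eta_t$); reversing the estimate in (a) produces $w$ with $\nabla F(\xb)w\in T_{D(\bar\nu^t)}(F(\xb))$ (resp.\ $\in T_{D(\nu^{t,\bar l_t})}(F(\xb))$) and $\skalp{\nabla f(\xb),w}<0$, a linearized descent direction for that branch, and metric subregularity ($\bar\nu^t\in\M_{sub}(\xb)$, resp.\ $\nu^{t,\bar l_t}\in\M_{sub}(\xb)$) upgrades it through GACQ into a genuine feasible descent curve, so $\min\{f(x)\mv F(x)\in D(\cdot)\}<f(\xb)$. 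The main obstacle is the rate estimate in (a): neither $\{\tilde u_t\}$ nor the multipliers are bounded a priori, because the exact branch subproblem may possess a flat (unbounded) optimal face and the perturbation may render the perturbed branch unbounded below. The delicate point is to show that decrease can occur only along directions nearly feasible for $Q_\nu$, where the descent rate is governed by the distance of $-\nabla f(\xb)$ to the fixed cone $\nabla F(\xb)^TQ_\nu^\circ$ and is therefore $O(\norm{x_t-\xb})$; quantifying this with the uniform metric-regularity modulus and matching the resulting $O(\sqrt{\sigma_t}+\norm{x_t-\xb})$ bound against the two scaling hypotheses is the technical heart, after which (c)--(e) follow from routine closedness, stabilization, and constraint-qualification arguments.
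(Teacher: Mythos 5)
Most of your proposal is sound, and your treatment of part (ii)(a) is genuinely different from the paper's: the paper argues by contradiction, normalizing $\tilde u_t$ and retracting the unit direction $\tilde u_t/\norm{\tilde u_t}$ onto $\nabla F(\xb)^{-1}\TD$ to manufacture a descent direction whenever $\sigma_t\norm{\tilde u_t}>\eta_t$, whereas you eliminate $\norm{\tilde v_t}$ between the value identity $-\skalp{\nabla f(x_t),\tilde u_t}=\sigma_t\norm{\tilde u_t}^2+\norm{\tilde v_t}^2$ and the perturbed branch inequality to obtain the explicit rate $\sigma_t\norm{\tilde u_t}\leq C_1\norm{x_t-\xb}+C_2\sqrt{\sigma_t}$, which is $o(\eta_t)$ since $\sigma_t/\eta_t^2\to 0$ implies $\sqrt{\sigma_t}/\eta_t\to 0$. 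That elimination is correct (complete the square in $\norm{\tilde v_t}$, then solve the quadratic inequality in $\norm{\tilde u_t}$) and even avoids the paper's a priori multiplier bound \eqref{EqBndLambda2} in this step. One small correction: the retractions you invoke onto the fixed branch cones $\nabla F(\xb)^{-1}T_{D(\nu)}(F(\xb))$ are justified by polyhedrality (Hoffman/Robinson error bounds for the fixed matrix $\nabla F(\xb)$), not by metric regularity of $F(\cdot)-D$, which says nothing about individual branches; the bound exists either way, so parts (i), (a), (b), (d), (e) go through essentially as you describe and parallel the paper's proof.

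The genuine gap is in part (ii)(c). You claim the hypothesis $\{\bar\nu^t,\nu^{t,2},\ldots,\nu^{t,K_t}\}\subset\M(\xb)$ ``secures the polarity identities \eqref{EqAssQ}'' --- but those identities hold automatically for convex polyhedral cones (Remark \ref{RemPolarCone}), so $\M(\xb)$ plays no role there. Its actual, indispensable role is hidden in the step you compress into ``read off from the accepted conditions'': the accepted inequalities \eqref{EqAuxLP} involve the perturbed data $\nabla f(x_t)$, $\nabla F(x_t)$, and to conclude $-\nabla f(\xb)\in\nabla F(\xb)^T Q_{\nu^l}^\circ$ in the limit you must transfer directions $u$ feasible for the exact system $\nabla F(\xb)u\in Q_{\nu^l}$ to nearby directions $\hat u_t$ feasible for $\nabla F(x_t)\hat u_t\in Q_{\nu^l}$, \emph{uniformly in $t$}. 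Hoffman's bound at $\xb$ does not help here, because the error-bound constant of the polyhedral system $\nabla F(x_t)u\in Q_{\nu^l}$ is not stable under perturbation of the matrix; this uniformity is exactly what metric regularity of $F(\cdot)-D(\nu^l)$, i.e. $\nu^l\in\M(\xb)$, provides through the robustness statement of Proposition \ref{PropMetrReg}. The paper's proof uses it precisely this way: assuming $-\nabla f(\xb)\notin\big(T_{D(\nu^{\bar l})}(F(\xb))\big)^\circ$, it takes an exact-feasible $u$ with $\skalp{\nabla f(\xb),u}=-\gamma<0$, $\norm{u}_\infty=\tfrac12$, and produces $x_t$-feasible $\hat u_t$ with $\norm{u-\hat u_t}=O(\norm{x_t-\xb})$ and $\skalp{\nabla f(x_t),\hat u_t}<-\eta_t$ for large $t$, contradicting acceptance. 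Without this transfer (or an equivalent dual argument bounding the LP multipliers $\mu_t\in Q_{\nu^l}^\circ$ via $\norm{\mu_t}\leq\kappa\norm{\nabla F(x_t)^T\mu_t}$, which again needs $\nu^l\in\M(\xb)$), the limiting claim in (c) is unjustified --- and this is the only place where the extra hypothesis on $\M(\xb)$ is actually consumed, so misplacing it leaves the heart of (c) unproved.
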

\begin{proof}(i) Let $R>0$ be chosen such that $f$, $F$  and their derivatives are Lipschitz on $\B(\xb,R)$ with constant $L$.
It is easy to see that we can choose $\epsilon>0$ such that for all $i\in\{1,\ldots,m_D\}$ we have $\tilde \A_i(\xb,\epsilon)=\tilde A_i(\xb,0)=\A_i(F(\xb))$ and such that for every $j\in\A_i(F(\xb))$ we have $\tilde\I_i^j(\xb,\epsilon)=\tilde\I_i^j(\xb,0)$. Consider $t$ with $\norm{x_t-\xb}<R$, $L\norm{x_t-\xb}<\epsilon_t<\epsilon/2$ and  fix $i\in\{1,\ldots,m_D\}$. For every $j\in \A_i(F(\xb))$ we have
\[\dist{F_i(x_t),D^j_i}\leq \norm{F_i(x_t)-F_i(\xb)}\leq L\norm{x_t-\xb}< \epsilon_t,\]
whereas for $j\not\in \A_i(F(\xb))$  we have
\[\dist{F_i(x_t),D^j_i}\geq \dist{F_i(\xb),D^j_i}-\norm{F_i(x_t)-F_i(\xb)}\geq \epsilon - L\norm{x_t-\xb}>\epsilon_t\]
showing $\tilde \A_i(x_t,\epsilon_t)=\A_i(F(\xb))$. Now fix $j\in\A_i(F(\xb))$ and let $l\in \tilde\I_i^j(\xb,0)$, i.e. $\skalp{a_l^{i,j},F_i(\xb)}= b_l^{i,j}$. By taking into account $\norm{a_l^{i,j}}=1$ we obtain
\[\skalp{a_l^{i,j},F_i(x_t)}\geq b_l^{i,j}-\norm{F_i(x_t)-F_i(\xb)}>b_l^{i,j}-\epsilon_t\]
implying $l\in \tilde\I_i^j(x_t,\epsilon_t)$, whereas for $l\not\in \tilde\I_i^j(\xb,0)=\tilde\I_i^j(\xb,\epsilon)$ we have
\[\skalp{a_l^{i,j},F_i(x_t)}\leq \skalp{a_l^{i,j},F_i(\xb)}+\norm{F_i(x_t)-F_i(\xb)}<b_l^{i,j}-\epsilon+\epsilon_t<b_l^{i,j}-\epsilon_t\]
showing $l\not\in\tilde\I_i^j(x_t,\epsilon_t)$. Hence $\tilde\I_i^j(x_t,\epsilon_t)=\tilde I(\xb,0)$. Because of our assumptions we have $\norm{x_t-\xb}<R$ and $L\norm{x_t-\xb}<\epsilon_t<\epsilon/2$ for all $t$ sufficiently large and this proves \eqref{EqExactIndexSet}.

(ii) In view of Proposition \ref{PropMetrReg} we can choose $\kappa$ large enough such that the mappings $F(\cdot)-D$, $u\rightrightarrows \nabla F(\xb)u-\TD$  and $F(\cdot)-D(\nu)$, $u\rightrightarrows \nabla F(\xb)u-T_{D(\nu)}(F(\xb))$, $\nu\in \M(\xb)$ are metrically regular near $(\xb,0)$ with modulus $\kappa$. By eventually shrinking $R$ we can assume that for every $x\in\B(\xb,R)$ the mappings $u\rightrightarrows \nabla F(x)u-\TD$, $u\rightrightarrows \nabla F(x)u-T_{D(\nu)}(F(\xb))$, $\nu\in \M(\xb)$ are metrically regular near $(0,0)$ with modulus $\kappa+1$.

Without loss of generality we can assume that $x_t\in\B(\xb,R)$ and \eqref{EqExactIndexSet} holds for all $t$ implying that $T_{D^j_i}(F(\xb))=T_i^j(\tilde x,\epsilon_t)$ holds for all $i=1,\ldots,m_D$, $j\in\A_i(F(\xb))$. In fact then the problem $QPDC(x_t,\epsilon_t,\sigma_t)$ is the same as
\[\min_{u,v}\skalp{\nabla f( x_t),u}+\frac{\sigma_t}2\norm{u}^2+\frac 12\norm{v}^2\quad  \mbox{subject to}\quad\nabla F(x_t)u+v\in T_D(F(\xb)).\]
The point $(\tilde u_t,\tilde v_t)$ is $\Q$-stationary for this program and thus also S-stationary by Theorem \ref{ThDC_QStat}(ii) and the full rank property of the matrix $(\nabla F(x_t)\;\vdots\;I)$. Hence there is a multiplier $\lambda_t\in \widehat N_{\TD}(\nabla F(x_t)\tilde u_t+\tilde v_t)\subset N_{\TD}(0)$ fulfilling $\tilde v_t+\lambda_t=0$, $\nabla f(x_t)+\sigma_t\tilde u_t+\nabla F(x_t)^T\lambda_t=0$ and we conclude
\begin{equation}\label{EqBndLambda}\norm{\tilde v_t}=\norm{\lambda_t}\leq (\kappa+1)\norm{\nabla f(x_t)+\sigma_t\tilde u_t}\end{equation}
from \eqref{EqBndLambdaMetrReg}.

By $\Q$-stationarity of  $(\tilde u_t,\tilde v_t)$ we know that $(\tilde u_t,\tilde v_t)$ is the unique solution of the strictly convex quadratic program
\begin{equation}\label{EqQPt}\min \skalp{\nabla f(x_t),u}+\frac{\sigma_t}2\norm{u}^2+\frac 12\norm{v}^2\ \mbox{subject to}\ \nabla F(x_t)u+v\in T_{D(\bar\nu^t)}(F(\xb)).
\end{equation}
For every $\alpha\geq0$  the point $\alpha(\tilde u_t,\tilde v_t)$ is feasible for this quadratic program and thus $\alpha=1$ is  solution of
\[\min_{\alpha\geq 0}\ \alpha\skalp{\nabla f(x_t),\tilde u_t}+\alpha^2\left(\frac{\sigma_t}2\norm{\tilde u_t}^2+\frac 12\norm{\tilde v_t}^2\right)\] implying
\[-\skalp{\nabla f(x_t),\tilde u_t}=\sigma_t\norm{\tilde u_t}^2+\norm{\tilde v_t}^2.\]
Hence
\begin{equation}\label{EqBndDecr}\sigma_t\norm{\tilde u_t}\leq -\skalp{\nabla f(x_t),\frac{\tilde u_t}{\norm{\tilde u_t}}}\leq \norm{\nabla f(x_t)}\end{equation}
 and from \eqref{EqBndLambda} we obtain
\begin{equation}\label{EqBndLambda2}\norm{\tilde v_t}=\norm{\lambda_t}\leq 2(\kappa+1)\norm{\nabla f(x_t)}.\end{equation}

(a) Assume on the contrary that $\xb$ is B-stationary but for infinitely many $t$ the point $x_t$ is not accepted as approximately M-stationary and hence $\norm{\tilde u_t}\geq \eta_t/\sigma_t$. This implies
\begin{eqnarray*}\dist{\nabla F(\xb)\frac{\tilde u_t}{\norm{\tilde u_t}}, \TD}&\leq& \dist{\nabla F(x_t)\frac{\tilde u_t}{\norm{\tilde u_t}}, \TD}+L\norm{x_t-\xb}\leq \frac{\norm{\tilde v_t}}{\norm{\tilde u_t}}+L\norm{x_t-\xb}\\
&\leq& 2(\kappa+1)\norm{f(x_t)}\frac{\sigma_t}{\eta_t}+L\norm{x_t-\xb}
\end{eqnarray*}
and by the metric regularity of $u\rightrightarrows\nabla F(\xb)u-\TD$ near $(0,0)$ we can find $\hat u_t\in\nabla F(\xb)^{-1}\TD$ with
\[\norm{\hat u_t-\frac{\tilde u_t}{\norm{\tilde u_t}}}\leq \kappa\left(2(\kappa+1)\norm{f(x_t)}\frac{\sigma_t}{\eta_t}+L\norm{x_t-\xb}\right).\]
Our choice of the parameters $\sigma_t$, $\eta_t$  together with \eqref{EqBndDecr} ensures that for $t$ sufficiently large we have
\begin{eqnarray*}
\skalp{\nabla f(\xb),\hat u_t}&\leq& \skalp{\nabla f(\xb),\frac{\tilde u_t}{\norm{\tilde u_t}}}+\norm{\nabla f(\xb)}\norm{\hat u_t-\frac{\tilde u_t}{\norm{\tilde u_t}}}\\
&\leq& \skalp{\nabla f(x_t),\frac{\tilde u_t}{\norm{\tilde u_t}}}+L\norm{x_t-\xb}+\norm{\nabla f(\xb)}\norm{\hat u_t-\frac{\tilde u_t}{\norm{\tilde u_t}}}\\
&\leq&-\sigma_t\norm{\tilde u_t}+L\norm{x_t-\xb}+\norm{\nabla f(\xb)}\norm{\hat u_t-\frac{\tilde u_t}{\norm{\tilde u_t}}}\\
&\leq& -\eta_t+ L\norm{x_t-\xb}+ \norm{\nabla f(\xb)}\kappa\left(2(\kappa+1)\norm{f(x_t)}\frac{\sigma_t}{\eta_t}+L\norm{x_t-\xb}\right)<0
\end{eqnarray*}
which contradicts B-stationarity of $\xb$. Hence for all $t$ sufficiently large the point $x_t$ must be accepted as approximately M-stationary.

To prove the statement that $x_t$ is also accepted as approximately $\Q_M$-stationary for all $t$ sufficiently large we can proceed in a similar way. Assume on the contrary that $\xb$ is B-stationary but for infinitely many $t$ the point $x_t$ is not accepted as approximately $\Q_M$-stationary. For those $t$ let $w_t$ denote some element
fulfilling $\nabla F(x_t)w_t\in T_{D(\nu^{t,\bar l_t})}\subset \TD$, $\norm{w_t}_\infty\leq 1$ and $\skalp{\nabla f(x_t),w_t}\leq-\eta_t.$ Then, similar as before we can find
$\hat w_t\in\nabla F(\xb)^{-1}\TD$ such that
\[\norm{\hat w_t-w_t}\leq \kappa\norm{\nabla F(\xb)-\nabla F(x_t)}\norm{w_t}\leq \kappa L \sqrt{n}\norm{x_t-\xb}\]
and  for large $t$ we obtain
\begin{eqnarray*}\skalp{\nabla f(\xb),\hat w_t}&\leq& \skalp{\nabla f(x_t),w_t}+\norm{\nabla f(\xb)-\nabla f(x_t)}\norm{w_t}+ \norm{\nabla f(\xb)}\norm{\hat w_t-w_t}\\
&\leq& -\eta_t+L\sqrt{n}(1+\kappa\norm{\nabla f(\xb)})\norm{x_t-\xb}<0\end{eqnarray*}
contradicting B-stationarity of $\xb$.

(b) By passing to a subsequence we can assume that for all $t$ the point $x_t$ is accepted as approximately M-stationary and hence $\sigma_t\norm{u_t}\leq\eta_t\to0$. By \eqref{EqBndLambda2}
we have that the sequence $\lambda_t\in N_{\TD}(0)$ is uniformly bounded and by passing to a subsequence once more we can assume that it converges to some $\lb\in N_{\TD}(0)$. By \cite[Proposition 6.27]{RoWe98} we have $\lb\in N_D(F(\xb))$ and together with
\[0=\lim_{t\to\infty}\big(\nabla f(x_t)+\nabla F(x_t)^T\lambda_t\big)=\nabla f(\xb)+\nabla F(\xb)^T\lb\]
M-stationarity of $\xb$ is established.

(c) By passing to a subsequence we can assume that for all $t$ the point $x_t$ is accepted as approximately $\Q_M$-stationary and $\{\bar \nu^t,\nu^{t,2},\ldots,\nu^{t,K_t}\}\subset \M(\xb)$.
Hence for all $t$ the point $x_t$ is also accepted as M-stationary and by passing to a subsequence and arguing as in (b) we can assume that $\lambda_t$ converges to some $\lb\in N_D(F(\xb))$ fulfilling $\nabla f(\xb)+\nabla F(\xb)^T\lb=0$.
Since the set $\M(\xb)$ is finite, by passing to a subsequence once more we can assume that there is a number $K$ and elements $\bar\nu,\nu^2,\ldots,\nu^K$ such that $K_t=K$ , $\bar\nu^t=\bar\nu$ and  $\nu^{t,l}=\nu^l$, $l=2,\ldots,K$ holds for all $t$. Since we assume that \eqref{EqExactIndexSet} holds we have $(\bar \nu,\nu^2,\ldots,\nu^K)\in\Q(\xb)$ and we will now show that $\xb$ is $\Q_M$-stationary with respect to $(\bar \nu,\nu^2,\ldots,\nu^K)$. Since $(\tilde u_t,\tilde v_t)$ also solves \eqref{EqQPt}, it follows that $\lambda_t=-v_t\in N_{T_{D(\bar\nu)}(F(\xb))}(\nabla F(x_t)\tilde u_t+\tilde v_t)\subset N_{D(\bar\nu)}(F(\xb))$ and thus $\lb\in N_D(F(\xb))\cap N_{D(\bar\nu)}(F(\xb))$ implying $-\nabla f(\xb)\in \nabla F(\xb)^T\Big(N_D(F(\xb))\cap \big(T_{D(\bar\nu)}(F(\xb))\big)^\circ\Big)$. There remains to show $-\nabla f(\xb)\in \big(T_{D(\nu^l)}(F(\xb))\big)^\circ=N_{D(\nu^l)}(F(\xb))$, $l=2,\ldots,K$.
Assume on the contrary that $-\nabla f(\xb)\not\in \big(T_{D(\nu^{\bar l})}(F(\xb))\big)^\circ$ for some index $\bar l\in \{2,\ldots,K\}$. Then there is some $u\in \nabla F(\xb)^{-1}T_{D(\nu^{\bar l})}(F(\xb))$, $\norm{u}_\infty=\frac 12$ such that $\skalp{\nabla f(\xb),u}=:-\gamma<0$ and since $\nu^{\bar l}\in\M(\xb)$, for each $t$ there is some $\hat u_t\in \nabla F(x_t)^{-1}T_{D(\nu^{\bar l})}(F(\xb))$ with \[\norm{u-\hat u_t}\leq(\kappa+1)\norm{\nabla F(\xb)-\nabla F(x_t)}\norm{u}\leq \frac{\sqrt{n}}2(\kappa+1)L\norm{x_t-\xb}.\]
It follows that for all $t$ sufficiently large we have $\norm{\hat u_t}_\infty\leq 1$ and
\begin{eqnarray*}\skalp{\nabla f(x_t),\hat u_t}&\leq&
 \skalp{\nabla f(\xb),u}+\norm{\nabla f(x_t)-\nabla f(\xb)}\norm{\hat u_t}+\norm{\nabla f(\xb)}\norm{u-\hat u_t}\\
 &\leq& -\gamma+L\sqrt{n}(1+\frac{\kappa+1}2)\norm{x_t-\xb}<-\eta_t\end{eqnarray*}
contradicting our assumption that $x_t$ is accepted as approximately $\Q_M$-stationary.

(d), (e) We assume that $\kappa$ is chosen large enough such that the mappings $F(\cdot)-D(\nu)$, $\nu\in \M_{sub}(\xb)$ are metrically subregular at $(\xb,0)$ with modulus $\kappa$. Then by \cite[Proposition 2.1]{Gfr11} the mappings $u\rightrightarrows \nabla F(\xb)u-T_{D(\nu)}(F(\xb))$, $\nu\in \M_{sub}(\xb)$ are metrically subregular at $(0,0)$ with modulus $\kappa$ as well. Taking into account that $(\tilde u_t,\tilde v_t)$ solves \eqref{EqQPt}, we can copy the arguments from part (a) with $T_D(F(\xb))$ replaced by  $T_{D(\bar\nu^t)}(F(\xb))$ to show the existence of $\hat u_t\in \nabla F(\xb)^{-1}T_{D(\bar\nu^t)}(F(\xb))$ with $\skalp{\nabla f(\xb),\hat u_t}<0$ whenever $x_t$ is not accepted as approximately M-stationary and $t$ is sufficiently large. In doing so we also have to recognize that metric regularity of $u\rightrightarrows \nabla F(\xb)u-T_{D(\bar\nu^t)}(F(\xb))$ can be replaced by the weaker property of metric subregularity. Since $\bar\nu^t\in\M_{sub}(\xb)$, $\hat u_t$ is a feasible descent direction and for sufficiently small $\alpha>0$ the projection of $\xb+\alpha \hat u_t$ on $F^{-1}(D(\bar \nu^t))$ yields a point with a smaller objective function value  than $\xb$. This proves (d). In order to show (e) we can proceed in a similar way. Using the same arguments as in part (a) we can prove the existence of a feasible direction $\hat w_t\in T_{D(\nu^{t ,\bar l_t})}$ with $\skalp{\nabla f(\xb),\hat w_t}<0$, whenever $t$ is sufficiently large and $x_t$ is not accepted as approximately $\Q_M$-stationary. Together with $\nu^{t ,\bar l_t}\in\M_{sub}(\xb)$ the assertion follows.
\end{proof}

\section*{Acknowledgements}

The research was supported by the Austrian Science Fund (FWF) under grants P26132-N25 and P29190-N32.


\begin{thebibliography}{99}
\bibitem{Bal85}{\sc E. Balas}, {\em   Disjunctive  programming  and  a  hierarchy  of  relaxations  for  discrete  optimization
problems}, SIAM J. Algebraic Discrete Methods, 6 (1985), pp.~ 466--486.
%
\bibitem{BeGfr16b}{\sc M. Benko, H. Gfrerer}, {\em On estimating the regular normal cone to constraint systems
and stationary conditions}, Optimization, DOI 10.1080/02331934.2016.1252915.
%
\bibitem{BeGfr16c}{\sc M. Benko, H. Gfrerer}, {\em An SQP method for mathematical programs with vanishing constraints
with strong convergence properties}, 2016, submitted, preprint {\tt http://www.numa.uni-linz.ac.at/publications/List/2016/2016-05.pdf}
%
\bibitem{CeSoa99}{\sc S. Ceria, J. Soares}, {\em Convex programming for disjunctive convex optimization},  Math. Program. Ser. A,  86 (1999), pp.~595--614.
\bibitem{DoRo14}{\sc A.~L. Dontchev, R.~T. Rockafellar}, {\em  Implicit Functions and
Solution Mappings}, Springer, Heidelberg, 2014.
\bibitem{FleKan03}{\sc M.~L. Flegel, C. Kanzow}, {\em A Fritz John approach to first order optimality conditions for mathematical
programs with equilibrium constraints}, Optimization, 52 (2003), pp.~277--286.
%
\bibitem{FleKanOut07}{\sc M.~L. Flegel, C. Kanzow, J.~V. Outrata}, {\em Optimality conditions for disjunctive programs
with application to mathematical programs with equilibrium constraints}, Set-Valued Anal., 15 (2007), pp.~139--162.
%
\bibitem{Fle81}{\sc R. Fletcher}, {\em Practical methods of optimization, Vol. 2: Constrained optimization}, Wiley, Chichester, 1981.
%
\bibitem{FuPa99}{\sc M. Fukushima, J.~S. Pang}, {\em Complementarity constraint qualifications and simplified
B-stationary conditions for mathematical programs with equilibrium constraints},
Comput. Optim. Appl., 13 (1999), pp.~111--136.
%
\bibitem{Gfr11}{\sc H. Gfrerer}, {\em First order and second order characterizations of metric subregularity and calmness of constraint set mappings}, SIAM J. Optim., 21 (2011), pp.~1439--1474.
%
\bibitem{GfrKl16}{\sc H. Gfrerer, D. Klatte}, {\em Lipschitz and H\"older stability of optimization problems
and generalized equations}, Math. Program. Ser. A,  158 (2016), pp.~35--75.
%
\bibitem{GfrOut16b}{\sc H. Gfrerer, J.~V. Outrata}, {\em  On computation of generalized derivatives of the normal-cone mapping and their applications}, Math. Oper. Res., 41 (2016), pp.~1535--1556.
%
\bibitem{GfrYe16a}{\sc H. Gfrerer, J.~J. Ye}, {\em New constraint qualifications for
 mathematical programs with equilibrium constraints via variational analysis}, 2016, submitted, preprint {\tt http://www.numa.uni-linz.ac.at/publications/List/2016/2016-04.pdf}
%
\bibitem{HenOut05}{\sc R. Henrion, J.~V. Outrata},  {\em Calmness of constraint systems with
applications}, Math. Program., Ser. B, 104 (2005), pp.~437--464.
%
\bibitem{Ho09}{\sc T. Hoheisel}, {\em Mathematical programs with vanishing constraints}, PhD-thesis, Julius--Maximilians--Universit\"at W\"urzburg, 2009.
%
\bibitem{Jer77}{\sc R. Jeroslow}, {\em Representability in mixed integer programming, I: Characterization results}, Discrete Appl. Math.,
17 (1977), pp.~223--243.
%
\bibitem{KaSchw10a}{\sc C. Kanzow, A. Schwartz}, {\em Mathematical programs with equilibrium constraints: enhanced Fritz John
conditions, new constraint qualifications and improved exact penalty results}, SIAM J. Optim. 20 (2010),
pp.~2730--2753.
%
\bibitem{KaSchw15}{\sc C. Kanzow, A. Schwartz}, {\em The price of inexactness: convergence properties of relaxation methods for mathematical programs with equilibrium constraints revisited}, Math. Oper. Res., 40 (2015), pp.~253--275.
%
\bibitem{Out99}{\sc J.~V. Outrata}, {\em  Optimality conditions for a class of mathematical programs with
equilibrium constraints}, Math. Oper. Res., 24 (1999), pp.~627--644.
%
\bibitem{Out00}{\sc J.~V. Outrata}, {\em A generalized mathematical program with equilibrium constraints}, SIAM J. Control
Optim., 38 (2000), pp.~1623--1638.
%
\bibitem{Rob81}{\sc S.~M. Robinson}, {\em Some continuity
properties of polyhedral multifunctions}, Math. Program. Studies,
14 (1981), pp.~206--214.
%
\bibitem{Ro70}{\sc R.~T. Rockafellar}, {\em
Convex analysis}, Princeton University Press, Princeton, New jersey, 1970.
%
%
\bibitem{RoWe98}{\sc R.~T. Rockafellar, R.~J-B. Wets}, {\em
Variational analysis}, Springer, Berlin, 1998.
%
\bibitem{SchSch00}{\sc H. Scheel, S. Scholtes}, {\em Mathematical programs with complementarity constraints:
Stationarity, optimality, and sensitivity}, Math. Oper. Res., 25 (2000),
pp.~1--22.
%
\bibitem{Ye00}{\sc J.~J. Ye}, {\em Constraint qualifications and
necessary optimality conditions for optimization problems with
variational inequality constraints}, SIAM J. Optim., 10 (2000), pp.~943--962.
%
\bibitem{Ye05}{\sc J.~J. Ye}, {\em Necessary and sufficient optimality conditions for mathematical programs
with equilibrium constraints}, J. Math. Anal. Appl., 307 (2005), pp.~350--369.
%
\bibitem{YeYe97}{\sc J.~J. Ye, X.~Y. Ye}, {\em Necessary optimality conditions for optimization problems
with variational inequality constraints}, Math. Oper. Res., 22 (1997),
pp.~977--997.
\end{thebibliography}
\end{document}